\documentclass[12pt]{amsart}
\newtheorem{theorem}{Theorem}
\usepackage[b5j,margin=2cm,centering,marginparwidth=1.9cm,marginparsep=.06cm]{geometry}
\makeatletter\def\SK@refcolor{\color{lime}}\def\SK@labelcolor{\color{lime}}\makeatother
\usepackage{graphicx,tikz,pgfplots,epstopdf,amssymb,amsmath,amscd,amsthm,psfrag,epsfig,stackengine,microtype,multirow,cancel,amsfonts,amssymb,color,caption,enumitem,fourier,bm}
\usepackage[mathic=true]{mathtools}
\usepackage[noadjust]{cite}
\usepackage{enumitem}
\usepackage[hidelinks=true]{hyperref}
\usepackage[capitalize,nameinlink,noabbrev]{cleveref}
\newtheorem{proposition}[theorem]{Proposition}
\newtheorem{lemma}{Lemma}

\newtheorem{ltheorem}{Theorem}

\theoremstyle{definition}

\newtheorem{remark}{Remark}
\newif\ifcomments
\newcounter{commentlabel}
  \newlength{\commentlift}
\makeatletter\def\commentfl@g{%
    \vbox to\z@{%
      \setlength{\fboxrule}{0.75pt}%
      \setlength{\fboxsep}{0.75pt}%
      \setlength{\commentlift}{1ex}%
      \addtolength{\commentlift}{\fboxrule}%
      \addtolength{\commentlift}{\fboxsep}%
\vss\color{blue}\rlap{\rlap{\vrule\@height\commentlift\@width\fboxrule}\raise \commentlift%
      \hbox{\fcolorbox{blue}{blue!3}{\normalfont\tiny\bfseries\thecommentlabel}}}}}
  \DeclareRobustCommand{\COMMENT}[1]{\commentstrue\stepcounter{commentlabel}%
\hypertarget{\the\value{section}-\the\value{commentlabel}}{}%
\edef\WRITECOM##1{\noexpand\write\COM{\thecommentlabel, S.##1}}\WRITECOM{\the\value{section}, \noexpand\hyperlink{\the\value{section}-\the\value{commentlabel}}{p.\the\value{page}}: {#1}}\@bsphack%
    \commentfl@g%
    \marginpar{\noindent\lineskip=1pt\lineskiplimit=\maxdimen\raggedright%
    \Tiny\def\Hrule{\strut\hrule height 1pt\strut}\textbf{\color{blue}\llap{\textbullet}\thecommentlabel:}{\color{blue}\thinspace#1\endgraf}}%
  \@esphack}%
  \DeclareRobustCommand{\COMMENTINLINE}[1]{\commentstrue\stepcounter{commentlabel}%
    \write\COM{\thecommentlabel, S.\the\value{section}, p.\the\value{page}: {#1}}\@bsphack%
      \noindent\setlength{\fboxsep}{0pt}\newline\fcolorbox{blue}{lightgray!20}{\begin{minipage}{\textwidth}\tiny\textbf{\color{red}\thecommentlabel}:\thinspace\Tiny\bfseries#1\par\end{minipage}}\allowbreak%
  \@esphack}\makeatother
\newwrite\COM
\immediate\openout\COM=comments
\AtEndDocument{\ifcomments\section*{Marginal Comments}\label{commentstex}{\let\tiny\relax\let\Tiny\relax\let\Hrule\newline\def\pullout#1, S.{\strut\llap{\textbf{#1: }}\S\ }\obeylines\parindent0pt\everypar{\normalsize\normalfont\pullout}\immediate\closeout\COM\def\MT_extended_eqref:n {\eqref}\def\math@bb{\mathbb }\input comments }\fi}
\makeatother
\makeatletter
\newcommand{\st}{\mathbin{\left\bracevert\text{$\phantom{|}$}\right.}}
\newcommand{\dfn}{\mathbin{{\vcentcolon}\hskip-1pt{=}}}

\makeatother
\newcommand{\ie}{that is, }

\newcommand{\Hrule}{{\color{white}\hrule height 2pt}\hrule height 2pt{\color{white}\hrule height 2pt}}
\newsavebox{\topbox}\newlength{\wdoftopbox}
\newsavebox{\botbox}\newlength{\htofbotbox}
\def\Buildrel#1#2{\sbox{\topbox}{#1}\settowidth{\wdoftopbox}{\usebox{\topbox}}\sbox{\botbox}{\ensuremath#2}\settoheight{\htofbotbox}{\usebox{\botbox}}\stackrel{\usebox{\topbox}}{\resizebox{\wdoftopbox}{\htofbotbox}{\usebox{\botbox}}}}
\stackMath
\makeatletter
\newcommand*{\relbarfill@}{\arrowfill@\relbar\relbar\relbar}
\newcommand*{\xminus}[2][]{\ext@arrow 0055\relbarfill@{#1}{#2}}
\newcommand\xxrightarrow[2][]{\mathrel{%
  \setbox2=\hbox{\stackon{\scriptscriptstyle#1}{\scriptscriptstyle#2}}%
  \stackunder[-2pt]{%
    \stackon[.2pt]{\xminus{\makebox[\dimexpr\wd2\relax]{}}}{\scriptscriptstyle#2}%
  }{%
   \scriptscriptstyle#1\,%
  }\,\llap{$\to$}%
}}
\makeatother
\newcommand{\lto}[1]{\xxrightarrow[\;#1\;]{}}
\begin{document}
\title{Horns in billiards}
\author{David De Frutos Ostrander}
\address{D. De Frutos Ostrander, 
Department of Mathematics and Statistics, Queen's University, Kingston, ON K7L 3N6, CA}\email{25xfxh@queensu.ca}
\author{Boris Hasselblatt}
\address{B.~Hasselblatt, Department of Mathematics, Tufts University, Medford, MA 02144, USA}\email{boris.hasselblatt@tufts.edu}
\author{Mark Levi}
\address{M. Levi, Department of Mathematics, The Pennsylvania State University, State College, PA 16802, USA}\email{mxl48@psu.edu}
\thanks{Mark Levi was partially supported by NSF grant DMS-9704554}
\bibliographystyle{plainurlmr}
\begin{abstract}
We show that, like cusps, horns in billiards expel every trajectory after finitely many collisions. We further produce an adiabatic invariant.
\end{abstract}
\maketitle
\section{Introduction}
For planar billiards of a hyperbolic nature, the smooth ergodic theory has been developed in some generality. This is presented well in the definitive book by Chernov and Markarian \cite{ChernovMarkarianBook}, which begins with the set of assumptions under which these developments can be carried out coherently. These assumptions include conditions to exclude an accumulation of collision times \cite[Section 2.4]{ChernovMarkarianBook}. Among them is that there are no horns (\cref{FIGHorn}),\begin{figure}[htb]
	\center{  \includegraphics[width=.5\textwidth]{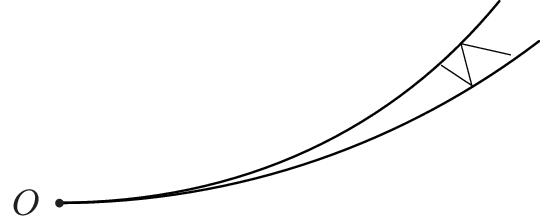}}
	\caption{A horn}  
	\label{FIGHorn}
\end{figure} 
because the authors deem establishing the impossibility of infinitely many collisions in a horn a ``possibly very difficult exercise\rlap.'' At the same time, they are unaware of examples of horns with infinitely many collisions in finite time. The purpose of this note is to establish that horns do not produce infinitely many collisions in finite time. Another aim is to produce an adiabatic invariant (in \cref{THMAdiabatic}).
\begin{ltheorem}\label{thm:main}
As in  \cref{FIGHorn},  consider two $C^3$ arcs in the plane with a common end-point $O$ at which they share a tangent line and have different curvatures of the same sign. Then there exists a neighborhood of $O$ such that any billiard trajectory starting in it will leave this neighborhood after finitely many collisions. 
\end{ltheorem}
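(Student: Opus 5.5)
We set up coordinates with $O$ at the origin and the common tangent along the $x$-axis, pointing into the horn. The two arcs are then graphs $y=f_1(x)$ and $y=f_2(x)$, $x\in[0,\delta]$, with
\[
f_i(x)=\tfrac{k_i}{2}x^2+O(x^3), \qquad 0<k_1<k_2 ,
\]
using $C^3$ regularity. The horn is the region $f_1(x)<y<f_2(x)$, whose width is $w(x)=f_2-f_1\approx\tfrac{k_2-k_1}{2}x^2$. Both walls bend the same way, so the lower wall $f_1$ is convex toward the region and focusing-like, while the upper wall $f_2$ is concave toward it and dispersing.

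The plan is to follow an orbit bouncing between the walls and track two quantities at each collision: the position $x_n$ and the angle $\theta_n$ between the velocity and the $x$-axis. At a collision with the wall $f_i$ at position $x$, reflection changes the direction angle by $\theta\mapsto 2\arctan f_i'(x)-\theta$. Hence two successive reflections, one off each wall, change $\theta$ by approximately $2(f_2'-f_1')(x)\approx 2(k_2-k_1)x$, up to sign conventions, plus higher-order terms. The sign should push the particle back out of the horn, toward decreasing $|\theta|$ ordering and then toward negative $x$-velocity. Over one bounce the displacement in $x$ is about $w(x)/\tan|\theta|$.

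The key step is an adiabatic-type quantity that mimics the cusp argument. As in a slowly varying channel, the candidate is $J=w(x)\,\sin\theta$, or more precisely the product of the width and the transverse momentum. We aim to show that $J$ is almost conserved from bounce to bounce, with errors summable along the orbit.

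Granting this, suppose the orbit went deep into the horn, $x_n\to 0$. Then $w(x_n)\to 0$, so $\sin\theta_n$ must grow until the transverse component is of order one. At that point each bounce turns the direction by an amount comparable to $x$ times a bounded factor, and the particle is driven outward.

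The argument for finitely many collisions then runs as follows. Infinitely many collisions in the neighborhood would force either $x_n\to 0$ or an accumulation inside. We exclude accumulation away from $O$ by standard arguments: these are non-tangential reflections between two transversal-separated arcs, so each flight has length bounded below. It remains to exclude $x_n\to 0$ with infinitely many collisions.

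For this we use monotonicity. While $\dot x<0$ (moving toward $O$), each pair of bounces rotates the velocity by roughly $2(k_2-k_1)x_n>0$ toward the outward direction. The flight length between walls is at least $w(x_n)$. The key difficulty is that $x_n$ may be tiny, so the rotation per bounce is tiny, while the number of bounces needed to make progress is large.

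We compare the two. Moving from $x$ to $x/2$ takes about $x\tan|\theta|/w\sim \tan|\theta|/x$ bounces, and each rotates by about $x$. The total rotation is therefore about $\tan|\theta|$, bounded below in every dyadic scale. Summing over dyadic scales, the rotation diverges, so after finitely many scales the velocity direction turns past vertical and $\dot x$ becomes positive. Thereafter the same estimate, run in reverse, shows that $x$ increases geometrically and the orbit exits the neighborhood. The $C^3$ hypothesis controls the $O(x^3)$ corrections in $f_i$ and the $O(x^2)$ corrections in $f_i'$, keeping the per-bounce error relatively $o(1)$.

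The main obstacle is making this dyadic-scale rotation estimate rigorous uniformly in the angle. It is delicate in particular for nearly grazing orbits, where $\theta$ is small and the two walls are hit at very different $x$, and for orbits that hit the same wall consecutively. The first attempt would be to use $J$ (or $x^2\sin\theta$) as a Lyapunov-like quantity whose per-step variation is bounded by $C x_n\,\Delta x_n$, which is summable.
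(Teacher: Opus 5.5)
There is a genuine gap, and it sits exactly where you locate the ``main obstacle.'' What is missing is an idea, not just an estimate.

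\textbf{Your monotonicity mechanism fails.} Your only monotone quantity is the direction angle $\theta$ measured from the fixed tangent, which is in effect the linear momentum along the axis of the horn. For that quantity the two walls compete: a reflection off the focusing wall $f_1$ turns the velocity toward $O$, and one off $f_2$ turns it away. An $f_1$-then-$f_2$ pair turns the velocity by $2\bigl(\arctan f_2'(x_b)-\arctan f_1'(x_a)\bigr)$ with $x_b\ne x_a$. Your value $2(k_2-k_1)x$ requires $|x_a-x_b|\ll x$.

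In the near-grazing regime (angles of order $x$) the sign is actually reversed. Suppose the segment leaving $f_1$ at $x_a$ meets $f_2$ (nearly) tangentially at $x_b$. Then the second reflection does (almost) nothing, so the pair's net effect is the inward turn from the focusing wall. To leading order, tangency occurs at $k_2x_b=\bigl(k_2-\sqrt{k_2(k_2-k_1)}\bigr)x_a<k_1x_a$. Runs of consecutive $f_1$-collisions likewise push toward $O$ with no compensation. So the dyadic-scale rotation count, the reversal, and the exit ``run in reverse'' all rest on a sign you do not have.

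\textbf{The fallback via $J$ does not close the gap.} A per-step bound $Cx_n\Delta x_n$, summed over an inward excursion from $x_0$, gives a total of order $x_0^2$. That is at least the order of $J_0\approx\tfrac{k_2-k_1}{2}x_0^2\sin\theta_0$, and far larger when $\sin\theta_0$ is small. So summability does not keep $J$ away from $0$. You would need genuine cancellation between $w'\,\Delta x\,\sin\theta$ and $w\,\Delta\sin\theta$, uniformly in the angle. The paper presents this adiabatic invariant only heuristically and does not use it for the theorem.

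\textbf{The reduction step also needs repair.} Flights away from $O$ are not bounded below: near-grazing runs along the focusing wall have arbitrarily short flights, and this is where $C^3$ and Halpern's theorem enter. Also, the dichotomy ``$x_n\to0$ or accumulation'' omits orbits that linger in the neighborhood for infinite time.

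\textbf{What the paper supplies instead} is a quantity that increases at \emph{every} collision, on either wall and at any angle. It is the angular momentum $L=\vec r\times\vec v$ about the center $C$ of the ``middle'' circle, the midpoint of the two centers of curvature at $O$.
\begin{itemize}
\item A collision changes $L$ by $L_+-L_-=-2(\vec v_-\cdot\vec n)(\vec r\times\vec n)$, with $\vec v_-\cdot\vec n>0$.
\item For the osculating circles, with radii $R_+=1/k_1$ and $R_-=1/k_2$, one has $-\vec r\times\vec n\approx d\,s/R_\pm>0$ on both walls, where $d=\tfrac12(R_+-R_-)$. The $C^3$ corrections are of higher order.
\item In your coordinates $L\approx Rv_x+xv_y$ with $R=\tfrac12(R_++R_-)$. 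At a focusing collision the lever-arm gain $x\,\Delta v_y$ beats the inward kick $R\,\Delta v_x$ precisely because $R<R_+$. In the tangential example above, $L$ still goes up.
\end{itemize}
Thus $L_+-L_->c\,s\,(\vec v_-\cdot\vec n)$ at each collision, with no pairing or alternation needed.

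The paper then argues on the limit $L^*$ of this bounded monotone sequence. If $L^*>0$, the orbit exits. If $L^*\le0$, the polar angles $\vartheta_n$ of the collision points about $C$ decrease. A positive limit is excluded because segments near a radial chord each add a uniform amount to $L$. The limit $\vartheta_n\to0$ is excluded because the increments force $\sum s_n<\infty$ with $s_n=R\vartheta_n$. Meanwhile tangency of the walls gives $\vartheta_{k+1}^{-1}-\vartheta_k^{-1}\le c_1$, hence $\sum\vartheta_n=\infty$, a contradiction.
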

The corresponding fact for cusps has long been well known. \cref{thm:main} shows that ``bending a cusp'' so one side becomes focusing does not alter this property, even though the simple geometric argument in \cite[Figure 2.7]{ChernovMakarian} does not apply and a new approach is needed---see \cref{RemarkAngular,REMAngular2} below.

A similar result holds in the presence of gravity \cite{Cusps} (which bends trajectories rather than boundaries).

\section{Proof of Theorem \ref{thm:main}}
We note that the $C^3$ assumption is made to avert infinitely many collisions with the focusing side in finite time \cite[Example, p.\ 298, Remark, p.\ 303]{Halpern}.

\cref{fig:threecircles2} shows three circles through $O$: two are osculating at $ O $ with the boundaries (with centers $C_+$ and $ C_-$), and  a ``middle'' circle  tangent to both boundaries at $ O $ and centered at  $C\dfn\frac12( C_++ C_-)$.  
\begin{figure}[htb]
	\center{  \includegraphics{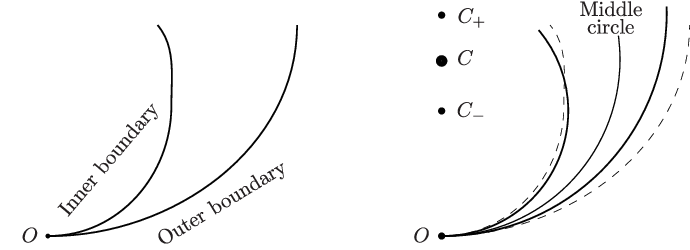}}
	\caption{A horn with centers of osculating circles and the ``middle'' circle.}  
	\label{fig:threecircles2}
\end{figure}
\begin{remark}\label{RemarkAngular}
The velocity of the particle in the direction of the tanget at $ O $ is not monontone: the collisions with opposite walls contribute opposite changes of this velocity.  Somewhat delicate estimates may seem to be needed to prove that the  repulsion from the cusp dominates.  This difficulty is circumvented by the key idea of our approach, which is to consider the {\it angular velocity} of the billiard particle with respect to $ C $,  the center of the middle circle. We will show that, in addition to the claim in the Theorem, this angular velocity   is a convex function of time, first decreasing and then increasing.
\end{remark}
The following lemma captures a key idea of the proof of \cref{thm:main}. 
\begin{lemma}\label{lem:key}
Let $ L $ be the angular momentum  of the particle {\it  relative to the center $C$ } of the middle circle. We think of the billiard particle as a point  of unit mass with unit speed and use the standard sign convention:    motion towards the cusp, i.e.,  clockwise, corresponds to $ L<0 $.

There exists a neighborhood of the cusp in which every collision with the boundary (whether focusing or dispersing) is repelling from the cusp in the sense of adding to $ L $ a counterclockwise component:
\begin{equation}
	 L_+> L_-, 
	\label{eq:L<L}
\end{equation}  
where $ L_- $ is the angular momentum before a collision and $ L_+ $ is the angular momentum after the same collision. Moreover, there exists a constant $ c>0 $ (independent of the trajectory) such that 
\begin{equation}
    L_+-L_- > cs \; \vec{v}_-\cdot \vec{n}, 
    \label{eq:deltaL}
\end{equation} 
where $s$ is the arc-length distance between the collision point and the cusp, $\vec v_-$ is the pre-collision velocity, $\vec n$ is the outward normal at the collision point, and ``$\cdot$'' denotes the dot product.
\end{lemma}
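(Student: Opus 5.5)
At a collision at point $P$ with outward unit normal $\vec n$, the velocity changes by $\vec v_+-\vec v_-=-2(\vec v_-\cdot\vec n)\,\vec n$, where $\vec v_-\cdot\vec n>0$. Angular momentum about $C$ is $L=(P-C)\times\vec v$, the scalar cross product, so I will start from the exact identity
\[
L_+-L_-=-2(\vec v_-\cdot\vec n)\,(P-C)\times\vec n .
\]
This reduces \eqref{eq:L<L} and \eqref{eq:deltaL} to one geometric claim about the boundary alone, independent of the trajectory: near $O$,
\[
-(P-C)\times\vec n\ \ge\ c' s
\]
on both arcs, for some $c'>0$. Then $c=2c'$ works. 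In words, the normal line at $P$ must miss $C$, and it must pass on the side that makes the impulse counterclockwise, with lever arm at least of order $s$.

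To prove the claim I will fix coordinates with $O$ at the origin and the common tangent along the $x$-axis. The horn lies in $x>0$, and both curvature centers lie on the positive $y$-axis, say $C_\pm=(0,R_\pm)$ with $R_+<R_-$. Then $C=(0,R)$ with $R=\tfrac12(R_++R_-)$, strictly between the two radii. I write each arc as a curve parametrized by arc length $s$ from $O$. The $C^3$ hypothesis gives $\gamma_\pm(s)=$ the osculating circle point plus $O(s^3)$, and the normal is correct to $O(s^2)$.

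The key observation concerns the osculating circle of radius $R_\pm$ centered at $C_\pm$. Its inward normal at a point $P$ points exactly at $C_\pm$. Hence $(P-C)\times\vec n$ equals $(C_\pm-C)\times\vec n$ up to sign, and this is the lever arm of the normal line about $C$. Since $C_\pm-C=(0,\pm\tfrac12(R_--R_+))$ and $\vec n$ has horizontal component of order $s/R_\pm$, the quantity $(C_\pm-C)\times\vec n$ is $\pm\tfrac12(R_--R_+)\,s/R_\pm+O(s^2)$.

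The sign then has to be checked against the orientation. The inner arc (radius $R_+$) has outward normal pointing away from $C_+$, and $C$ lies beyond $C_+$. The outer arc (radius $R_-$) has outward normal toward $C_-$, with $C$ on the near side. The expected outcome is that both give a counterclockwise impulse. I will verify this with the convention that clockwise means $L<0$, by checking one explicit point on each arc. The true curves differ from the osculating circles by $O(s^3)$ in position and $O(s^2)$ in the normal, so the lever arm changes only by $O(s^2)$. Choosing the neighborhood small then gives $-(P-C)\times\vec n\ge \tfrac14(R_--R_+)s/\max R_\pm$.

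The main obstacle is the sign bookkeeping in this last step. I must confirm that both walls push the same way about the middle center, which is exactly why $C$ is taken as the midpoint of $C_\pm$. I must also make sure the $O(s^2)$ error from the $C^3$ Taylor expansion is uniform, so that $c$ is independent of the trajectory.
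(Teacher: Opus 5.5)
Your proposal is correct and follows the paper's proof: the same identity $L_+-L_-=-2(\vec v_-\cdot\vec n)\,\vec r\times\vec n$, the same reduction to $-\vec r\times\vec n\ge c's$, a computation for the osculating circles, and an $O(s^2)$ perturbation from the $C^3$ hypothesis. Your observation that the normal line passes through $C_\pm$ gives $-\vec r\times\vec n=\tfrac12(R_--R_+)\sin(s/R_\pm)$ exactly on both osculating circles; this is a tidier replacement for the paper's triangle computation of $\sin\alpha$, and it avoids the paper's bound $|\vec r|\ge R$, which actually fails on the dispersing side.

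Do redo the sign bookkeeping, though. In your labeling, the table-outward normal (the one with $\vec v_-\cdot\vec n>0$) points toward $C_+$ on the inner, dispersing arc and away from $C_-$ on the outer, focusing arc, and $C_\pm-C=(0,\mp\tfrac12(R_--R_+))$. Both signs you wrote are reversed, and they only cancel by accident.
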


\vskip 0.1 in 
\begin{proof}[Proof of \cref{lem:key}]
We denote by $R_+, R_-$ the radii of the (osculating circles of the) sides and let $R\dfn\frac12(R_++R_-)$ (the radius of the middle circle) and $d\dfn\frac12(R_+-R_-)>0$.
\begin{figure}[htb]
	\center{  \includegraphics{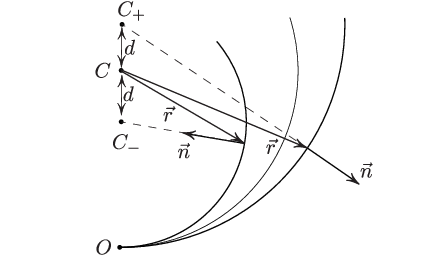}}
	\caption{Proof that the angular momentum relative to $ \overline O $ is repelled from the tip of the cusp.}  
	\label{fig:threecircles}
\end{figure}

With $C$ (the center of the middle circle) chosen as the origin, let $\vec r $  be the position vector of the particle at the moment of collision—see \cref{fig:threecircles}. The post- and pre-collision   velocity vectors $\vec v_\pm $ satisfy 
\begin{equation}
	     \vec v_+-\vec v_-=-2 (\vec v_-\cdot \vec n) \vec n, 
	\label{eq:collisionrule}
\end{equation}  
where $\vec n$ is the unit {\it  outward} normal to the  boundary. By definition,  $ L_\pm = \vec r \times \vec v_\pm\dfn\det[\vec r\ \vec v_\pm] $, the 2-dimensional cross product ($(x_1,y_1) \times (x_2,y_2) \dfn x_1y_2-x_2y_1$). Thus we obtain \eqref{eq:L<L}:
\begin{equation}
     L_+-L_- =
    \vec r \times (\vec v_+-\vec v_-) \buildrel{\mathclap{\eqref{eq:collisionrule}}}\over{=} -2 (\underbrace{\vec v_-\cdot \vec n}_{>0})\underbrace{\vec r \times \vec n}_{<0}  > 0.   
    \label{eq:deltaL1}
\end{equation} 
To complete the proof of \eqref{eq:deltaL}, it remains to show that 
\begin{equation}
     -2\vec r \times \vec n \geq cs 
    \label{eq:rtin}
\end{equation} 
for some $ c>0 $. 


We first prove this  for circles, concentrating on the focusing circle. Figure \ref{fig:horns3} yields
\begin{equation}
    -\vec r \times\vec n =| \vec r| \sin \alpha\ge R\sin \alpha,
   \label{eq:rtinn}
\end{equation}
so it remains to establish the lower bound $cs$ for $ \sin\alpha$.

To that end, we express
the length of the perpendicular from $C$ onto the long side of the triangle in \cref{fig:horns3}  in two ways: 
\begin{equation}
	 (R_+-d \cos \varphi ) \tan \alpha= d \sin \varphi .
	\label{eq:phi}
\end{equation}  
Using $\tan\alpha=\sin\alpha+O(\alpha^3)$,  $\sin\varphi=\varphi+O(\varphi^3)$, $\cos\varphi=1+O(\varphi^2)$, and $R_+-d=R$, \eqref{eq:phi} becomes 
$ R \sin  \alpha = d \varphi +O( \varphi ^2 ). $  
 But $\varphi  = s/R_++O(s^2)$ from \cref{fig:horns3}, so
 \begin{equation} 
   R\sin \alpha=dR_+^{-1}s+O(s^2).
    \label{eq:alpha}
\end{equation} 
Together with \eqref{eq:rtinn}, this proves \eqref{eq:rtin}, and hence (\ref{eq:deltaL}) by taking $c\dfn d/R_+>0$. 
The same argument applies to the dispersing boundary, and we omit it. 
 \begin{figure}[thb]
	\center{  \includegraphics[scale=0.8]{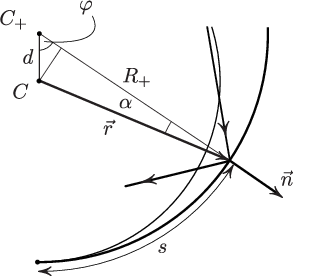}}
	\caption{The focusing boundary and the middle circle (radius $R= R_+-d$).}  
	\label{fig:horns3}
\end{figure}

This completes the proof of the Lemma for circles;   let us outline the proof for arbitrary curves. Let  $ \gamma (s) $ and $ \gamma_0(s) $ be parametric representations of the curve $ \gamma $ and its osculating circle $ \gamma_0 $, with arc-length counted from the tip of the cusp. 
We have 
\[
    \gamma (s) - \gamma_0(s) = O( s ^3 ), \ \ \gamma ^\prime  (s) - \gamma_0^\prime (s) = O( s ^2 ). 
\]  
These corrections are of higher order in $s$, which shows that (\ref{eq:deltaL}) still holds for osculating curves, for sufficiently small $ s $.  
 This completes the proof of Lemma \ref{lem:key}. 
\end{proof}
\begin{remark}\label{REMAngular2}
The main advantage of using angular momentum versus linear momentum is the following. As far as the linear momentum (in the direction of the tangent at the cusp) goes, there is competition: collisions with the dispersing boundary repel, while the collisions with the focusing boundary attract towards the cusp, and one is then forced into showing that the first effect dominates.  But considering angular momentum with respect to the center of an middle circle reveals a hidden cooperation: not just one but both boundaries contribute to repulsion from the cusp. 
\end{remark}
\begin{proof}[Proof of Theorem \ref{thm:main}]  Consider a trajectory starting in the neighborhood in \cref{lem:key}. Let $ L_{\pm}^n $ be the sequence of angular momenta at collisions while in this neighborhood. We have 
\[
 L_-^1<L_+^1=L_-^2<L_+^2\ldots. 
\]
We take this to be the maximal sequence and wish to show that it is finite. 
Assume the contrary: that it is infinite. A bounded infinite monotone sequence has a limit $L^\ast $, which in our case is in $ [-R, R] $. To arrive at a contradiction, consider two cases: 
\begin{enumerate}
\item $ L^\ast  \in [-R, 0] $, and 
\item $ L^\ast  \in (0,R] $. 
\end{enumerate} 

Note that in the first case, $ -R\leq L^\ast\leq 0 $, $  L^\ast =-R $ is impossible, since after the first collision the angular momentum strictly increases according to \eqref{eq:L<L}, and $-R $ is its least possible value. So $ -R< L_+^1<L_\pm^n\leq 0 $ for all $n>1$. Furthermore, the sequence $ \{\theta_n\} $ of polar angles (with respect to the middle circle) at the moments of collision is decreasing because $ L$ is negative, and thus so is the angular velocity (with respect to the center $ C $). Thus $ \theta_n \downarrow \theta^\ast\ge0 $. 

Consider first  $ \theta ^\ast > 0 $. There cannot be infinitely many successive collisions with the focusing side \cite{Halpern} (not to mention the dispersing side), so up to passing to a subsequence, we can take the $\theta_n$ to correspond to end-points of trajectory segments connecting the two boundaries. (In reality, the collisions become alternating after a possible succession of collisions with the focusing boundary.)
The segments approach the segment $ I $  of the radial line $ \theta= \theta ^\ast $ lying inside the cusp. Therefore each segment that is sufficiently close to $ I $     contributes a (uniformly) positive amount to the angular momentum, which implies that $ L_\pm^n \rightarrow \infty $, a contradiction. 

Next, we address the case of $ \theta ^\ast=0 $ (accumulation on the tip of the horn). Since $ 
L_\pm^n>L_+^1>-R $, the angles between   the trajectory and the lines $ \theta = {\rm const.} $ are bounded away from $\pm\pi/2$ by a positive constant (depending on $ L_+^1   = R \dot \theta (t_1+0)$, where $t_1$ denotes the time of the first collision). That is, trajectories are sufficiently transverse to the circles concentric with the middle circle. In particular, collisions alternate between the two boundaries while this transversality holds, i.e., for all sufficently large $ n $. And since the tangents of the two  boundaries and of the middle circle are arbitrarily close in sufficient proximity to the cusp, we conclude that $  | \vec v^n_\pm\cdot \vec n | \geq \varepsilon >0  $ for all sufficiently large $ n $. 
Using $ L_+^{n-1}=L_-^n $ in \eqref{eq:deltaL}, we   obtain
\begin{equation}\label{EQDeltaL}
L_+^n-L_+^{n-1}=L_+^n- L_-^n>\varepsilon cs_n.
\end{equation} 
It should be noted that $ s $ in equation \eqref{eq:deltaL}
is measured along a boundary, while here $s$ is measured along the middle circle; however, they are of the same order as long as $ | \vec v^n_\pm\cdot \vec n | \geq \varepsilon >0  $.

The sum of \eqref{EQDeltaL} over $n$ telescopes, and we obtain 
\[
L^*-L_+^1\ge  L^N_+-L^1_+ \ge \varepsilon c \sum_{n=2}^Ns_n,  
 \]  
so that $ s_n $ is summable; and since  $ s_n = R \theta_n $, we have  $ \sum_{n=1}^ \infty \theta_n < \infty $. 

On the other hand, the angles of intersection of our trajectory with the boundaries are all bounded away from $ 0 $. Together with the fact that the boundaries are tangent at the tip of the cusp this implies that there exists $ c_1>0 $ such that
\(
   0<\theta _k- \theta _{k+1}< c_1 \theta _k \theta_{k+1}, 
 \) 
hence 
\[
    \theta_{k+1}^{-1} - \theta_k^{-1} \leq c_1. 
\]  
Summation of this telescopes and gives
\(
     \theta_n ^{-1}  -  \theta_0 ^{-1}  \leq c_1n
\),  
so that \(\theta_n\ge\dfrac{1}{c_1n+\theta_0^{-1}}\), and thus
$
    \sum_{n=1}^ \infty  \theta_n = \infty,
$ 
a contradiction with summability of $\{\theta_n\}$.

The second case, $ L^\ast  \in (0,R] $, corresponds to escape from the horn: $ L_\pm^n>L_\pm^{n_0} >0$ for some $ n_0 $ and all $ n>n_0 $, so $ r(t) \dot \theta (t) >L_\pm^{n_0} >0 $ for all sufficiently large times $ t $. Since $ r(t) $ is bounded from above and there cannot be infinitely many collisions in finite time, we have $ \theta \rightarrow \infty $, a contradiction. 

This completes the proof of the theorem. 
\end{proof}
\section{Adiabatic invariant}\label{SAdiabatic}
It turns out that the product of particle's velocity normal to the cusp and the width of the cusp is an adiabatic invariant. This is similar the Fermi billiards, i.e. particles bouncing elastically between two slowly moving walls, the simplest imaginable model of ideal gas.
More precisely, we have the following. 
\begin{ltheorem}\label{THMAdiabatic}
$s ^2 \sqrt{1- \dot s ^2 }$ is  an adiabatic invariant for a billiard horn.
\end{ltheorem}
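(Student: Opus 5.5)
Near the tip the horn is thin, with width $w(s)\approx \kappa s^2$ where $\kappa=\frac12|k_+-k_-|=\frac{d}{R_+R_-}$; here $s$ is the arc-length (or polar-angle times $R$) coordinate along the middle circle, measured from $O$. The plan is to split the unit-speed motion into a slow drift along the middle circle and a fast transverse bouncing between the two walls, and then apply the Fermi-billiard adiabatic argument to the transverse motion. In that model, the transverse speed times the wall separation is conserved up to small errors.

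\textbf{Step 1 (coordinates).} Work in polar coordinates $(r,\theta)$ centred at $C$, with $s=R\theta$ and transverse coordinate $u=r-R$. Along a trajectory we have $\dot s^2+\dot u^2\approx 1$, so the transverse speed is $\sqrt{1-\dot s^2}$ up to curvature corrections. The wall positions are $u_\pm(s)=\pm\frac12\kappa s^2+O(s^3)$, using the $C^3$ expansion $\gamma-\gamma_0=O(s^3)$ from the proof of \cref{lem:key}. The claim then reads: $w(s)\,|\dot u|$ is an adiabatic invariant, and since $w(s)\approx\kappa s^2$ this is a constant multiple of $s^2\sqrt{1-\dot s^2}$.

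\textbf{Step 2 (one-bounce bookkeeping).} At a collision with a wall of slope $\beta=u_\pm'(s)=O(s)$, reflection changes the transverse speed $|\dot u|$ by about $2|\beta|\,|\dot s|$ to first order. It also changes $\dot s$ by the compensating amount, which is exactly the angular-momentum increment $\Delta L$ from \cref{lem:key}. Between collisions the transverse coordinate traverses the width $w$ in time $w/|\dot u|$, during which $s$ changes by $\Delta s=w\,\dot s/|\dot u|$.

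\textbf{Step 3 (telescoping).} Over one full transverse round trip there are two collisions, one with each wall. I would show
\[
\Delta\log|\dot u|=-\Delta\log w+O(\text{higher order}).
\]
The reason is that the two wall slopes $\pm\kappa s$ add to $w'(s)$, and
\[
\frac{\Delta|\dot u|}{|\dot u|}\approx -\frac{w'(s)\,\dot s}{|\dot u|}\cdot\frac{2w}{2w}\approx-\frac{w'\Delta s}{w}.
\]
This is the standard Fermi computation: the walls recede or approach with relative speed $w'\dot s$. Summing over round trips gives $\log(w|\dot u|)=\text{const}+\sum(\text{errors})$.

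\textbf{Main obstacle: controlling the error sum.} The errors per round trip are $O(s\cdot w)$ or $O(\beta^2)$ relative to the main term. Their sum over the passage from $s_0$ down to the turning point and back is $\sum O(s)\,\Delta s/s$-type terms, and this must go to zero as $s_0\to0$. That is exactly the meaning of "adiabatic": the invariant is preserved up to $o(1)$ as the horn is entered at smaller scales.

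I would handle this as in the proof of \cref{thm:main}, using the recursion $\theta_k-\theta_{k+1}\lesssim\theta_k\theta_{k+1}$ to bound the number of round trips needed to cover a dyadic range of $s$, so that the errors sum geometrically.

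The delicate region is near the turning point, where $\dot s\to0$ and the scale-separation assumption $w\ll$ drift length fails to be obviously uniform. There I would rely on the fact that $|\dot u|\approx1$, so the transverse bouncing is fast relative to the drift; the number of bounces there is $O(1/\theta)$ while each error is $O(\theta^2)$.

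Finally, the intrinsic curvature corrections, including replacing $\dot u$ by the true normal speed, are $O(s)$ relative to the main term. I would absorb them by noting that they vanish at the tip.
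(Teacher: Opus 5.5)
Your leading-order computation is correct, and it reaches the invariant by a genuinely different route from the paper's.

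\textbf{Comparison with the paper.} The paper works with $L$ rather than with the transverse speed. It takes the exact increment \eqref{eq:deltaL1}, with $\vec v_-\cdot\vec n\approx\sqrt{1-L^2/R^2}$ and $-\vec r\times\vec n\approx ds/R_\pm$ from \eqref{eq:alpha}. It divides the outer-plus-inner increment by the round-trip time to get $\Delta L/\Delta t\approx 2R(1-L^2/R^2)/s$, and substitutes $L=R\dot s$ to obtain the averaged equation \eqref{eq:seq1}. It then checks that $s^2\sqrt{1-\dot s^2}$ is a first integral of that ODE. In what sense the billiard itself nearly conserves it is explicitly left unexamined.

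You bypass the ODE and telescope the Fermi relation $\Delta\log|\dot u|\approx-\Delta\log w$ over round trips of the discrete dynamics. At first order the two computations coincide: your kick $2|\beta|\,|\dot u|$ to $\dot s$ is the paper's $\Delta L/R$, and your wall slopes are its $\sin\alpha$. The paper's route reuses \cref{lem:key} and produces an explicit averaged equation of motion. Yours explains the invariant as width times transverse speed, and it is the natural starting point for an actual error estimate, which the paper does not attempt.

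Three small corrections. First, in the polar chart about $C$ the walls are not at $\pm\frac12\kappa s^2$; their slopes are $ds/(RR_+)$ and $-ds/(RR_-)$. This is harmless, since only the difference $w'=2\kappa s$ enters at first order. Second, in Step 3 the round-trip change of $|\dot u|$ is $2w'|\dot s|$, and $\Delta s$ must be the round-trip displacement $2w\dot s/|\dot u|$, not the one-leg displacement of Step 2. Third, $\dot u=\dot r$ drifts between collisions ($\ddot r=L^2/r^3$); this is avoided by measuring the transverse speed as $\sqrt{1-L^2/R^2}$, as the paper does.

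\textbf{The error control.} This part goes beyond the paper, and one step fails as written. The recursion $\theta_{k+1}^{-1}-\theta_k^{-1}\le c_1$ from the proof of \cref{thm:main} bounds each step in $s$ from above. It therefore bounds the number of round trips in a dyadic range from \emph{below}, whereas summing errors needs an upper bound.

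The count does matter. The per-round-trip remainder is $O(s^2)$ in absolute terms, not small relative to the main term: each collision changes $\log|\dot u|$ by about $-2\beta^2$ even when $\dot s=0$.

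The right tool is \cref{lem:key}. By \eqref{eq:deltaL}, each collision raises $L$ by at least $cs\,\vec v_-\cdot\vec n\approx cs|\dot u|$. So for $|\dot u|$ bounded below, an $O(s^2)$ remainder is $O(s)\,\Delta L$. Since $L$ increases monotonically within $[-R,R]$, the remainders sum to $O(s_0)$ over the whole passage, turning point included, with no separate count.

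\textbf{The delicate regime.} This bound also shows that the delicate regime is not the turning point, where $|\dot u|\approx 1$ and the separation of scales is best. It is grazing entry. The remainders carry inverse powers of $|\dot u|$: a collision turns the direction by $2\beta$, which changes $\dot s/|\dot u|$ by about $2\beta/|\dot u|^2$. Once $|\dot u|$ is comparable to the wall slopes, a single round trip moves $s$ by a fixed fraction of itself, and collisions need not alternate. So what your argument can deliver is near-conservation for entries with transverse speed bounded below, the small parameter being roughly $s_0/|\dot u_0|$. It does not give this uniformly over all trajectories as $s_0\to0$.
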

This is indeed proportional to the product of the width and the normal velocity, since  the width  $ \sim s ^2 $; this expression is  exactly  the same as in the concave cusp billiard \cite[p.\ 499]{BalintChernovDolgopyat}, \cite{Cusps}. (The latter article also notes implications for penetration depth.)

What we actually produce is the following continuum approximation for the motion into the cusp:
\begin{equation}
	 \ddot s = 2( 1 - \dot s ^2 ) /s. 
	\label{eq:seq1}
\end{equation}  
This differential equation preserves $s ^2 \sqrt{1- \dot s ^2 }$:
\[
\frac d{dt}s ^2 \sqrt{1- \dot s ^2 }=2s\dot s \sqrt{1- \dot s ^2 }-s^2\frac{\dot s\ddot s}{\sqrt{1- \dot s ^2 }}\overset{\eqref{eq:seq1}}{=}0.
\]
For the actual  billiard, $s ^2 \sqrt{1- \dot s ^2 }$ is only approximately  conserved (in a sense we do not examine here); that is, it is  an adiabatic invariant for our billiard map.  

We now produce the continuum approximation \eqref{eq:seq1} for the motion into the cusp as follows. We first compute the change of angular momentum $ \Delta L $ in terms of $ L $ and $ s $ after consecutive collisions with opposite boundaries. The resulting difference equation \eqref{eq:pre} for $ L $ has as its continuum limit  the differential equation \eqref{eq:seq1} for $ s $, which  preserves $s ^2 \sqrt{1- \dot s ^2 }$, the adiabatic invariant we seek. 

Now, $ \Delta L $ is given by   \eqref{eq:deltaL1}, where the first factor is $ \vec v_-\cdot\vec n= \cos \psi_\partial $, with $ \psi _\partial $  the angle between the trajectory and the normal to the boundary.  But $\psi_\partial= \psi + O(s ) $, where $ \psi $ is the angle between the trajectory segment and the middle circle, and $ s $ is the arc-length along the middle circle. Using  $  L=R\sin  \psi $ in the last step,  we have 
\begin{equation}
	   \vec v_-\cdot\vec n= \cos \psi_\partial  = \sqrt{  1- \sin  ^2 \psi_\partial } = \sqrt{  1- \frac{L ^2 }{R ^2 } } +O(s).
	\label{eq:vdotn}
\end{equation}

To decipher the second factor $ -{\vec r} \times  {\vec n} = |\vec r| \sin  \alpha  $ in (\ref{eq:deltaL1}) (see \eqref{eq:rtinn}),  we note that $  | \vec r  | = R+ O(s^2) $, while $ \sin  \alpha =ds/R_+R+O( s ^2 )$   by \eqref{eq:alpha}; this results in
\begin{equation}\label{EQRcrosN}
    -{\vec r} \times  {\vec n} =-Rds/(RR_+)+ O(s ^2 )= {d}R_+^{\;-1}s + O(s ^2 ). 
\end{equation}  
Substituting this and \eqref{eq:vdotn}  into \eqref{eq:deltaL1} gives
\begin{equation}
	 \Delta L_{\rm outer} = -2 (\vec v_-\cdot \vec n)\;\vec r \times \vec n\Buildrel{\Tiny\eqref{eq:vdotn}\eqref{EQRcrosN}}{=}2\sqrt { 1- \frac{L ^2 }{R ^2 } }dR_+^{\;-1}s + O(s ^2 ). 
	\label{eq:dl1}
\end{equation}  
The time of the round trip from the outer wall and back   is 
\begin{equation}
	 \Delta t = 2 \frac{\rm width}{\cos \psi }   = 
     2 \frac{\frac{1}{2} (R_-^{-1}- R_+^{\;-1})s ^2  +O(s^3)}{\cos \psi }  \Buildrel{\tiny\eqref{eq:vdotn}}= 
     \frac{ (R_- ^{-1}- R_+ ^{\;-1})s ^2 +O(s^3) }{\sqrt{  1- \frac{L ^2 }{R ^2 } } }  . 
	\label{eq:Deltat}
\end{equation}  
For the  collision with the \emph{outer wall}, dividing \eqref{eq:dl1} by \eqref{eq:Deltat} gives  
\begin{equation}
	   \frac{\Delta L_{\rm outer}}{\Delta t}  ={2\frac{d}{{R_+}(R_- ^{-1}- R_+ ^{\;-1})}}\biggl( 1- \frac{L ^2 }{R ^2 }  \biggl)  
     \frac{1}{s}+ O(s^0)=R_-\biggl( 1- \frac{L ^2 }{R ^2 }  \biggl)  
     \frac{1}{s}+ O(s^0); 
	\label{eq:DLt2}
\end{equation} 
here we added a subscript as reminder that the collision is with the outer boundary. The order of the error is explained by the calculation $\frac{s(a+O(s))}{s^2(b+O(s))}=\frac{a}{bs}+O(s^0)$. 

For the collision with the inner boundary, likewise 
\begin{equation}
	  \frac{\Delta L_{\rm inner}}{\Delta t}  =R_+\biggl( 1- \frac{L ^2 }{R ^2 }  \biggl)  
     \frac{1}{s}+ O(s^0).
	\label{eq:dlo2}
\end{equation} 
Adding \eqref{eq:DLt2} and  \eqref{eq:dlo2}, we obtain the estimate of the change of angular momentum per unit time: 
\begin{equation}
	  \frac{\Delta L}{\Delta t} = 2R  \biggl( 1- \frac{L ^2 }{R ^2 }  \biggl)\frac{1}{s}+   O(s^0).
	\label{eq:pre}
\end{equation}    
Consider now our billiard  particle as being shadowed by a smoothly moving particle sliding along the middle circle and undergoing the same rate of change of angular momentum. 
That is, we approximate (\ref{eq:pre}) by the differential equation 
\[
\frac{d L}{d t} = 2R  \biggl( 1- \frac{L ^2 }{R ^2 }  \biggr)\frac{1}{s}. 
\]  
Since 
$ L  = R \dot s$, where $ s $ measures the arc-length along the middle circle (or along either of the two boundaries, since this is only an approximation valid for small $ s $), we obtain 
\[
    R \ddot s = \frac{dL}{dt} = 2R  \biggl( 1- \frac{L ^2 }{R ^2 }\biggr)\frac{1}{s}= 2R( 1- \dot s^2)/s,
\] 
hence \eqref{eq:seq1}.


\bibliography{cusp-bib}

\end{document}

\color{black}

The "linear" version of the adiabatic invariant from before follows (not necessarily to keep, but for comparison):

\(\{x=g(y)\}\dfn\) dispersing side,  \(\{x=f(y)\}\dfn\) focusing side, \(r\dfn g-f\).
\newline At ``dispersing'' collisions,
\[
\frac{\Delta v_{\rm vert}}{ \Delta t}= \frac{\cos\theta_{n-1}-\cos\theta _n}{(g_{n+1}-f_n)/v_H}\Buildrel{\Tiny\text{Reflection}}{=}
v_H
\frac{(\overbracket{\sin\theta_{n-1}}^{\mathclap{\approx\sin\theta_n}}+\overbracket{\sin\theta _n}^{\mathclap{=v_H}})g'_n}{\underbracket{g_{n+1}}_{\approx g_n}-f_n}
\approx2v_H^2\frac{g'_n}{g_n-f_n}.
\]
At ``focusing'' collision: \(\displaystyle\frac{\Delta v_{\rm vert}}{ \Delta t}\approx 2v_H^2\frac{-f'_n}{g_n-f_n}\). 
Average these:
\[
\frac{\Delta v_{\rm vert}}{\Delta t}\approx v_H^2\frac{g_n'-f'_n}{g_n-f_n}\text{ or }\boxed{\ddot y=\dot v_V = v_H^{\,2}\frac{r'}{r}}\text{ as before,}
\]
with  \(\boxed{I\dfn rv_H}=\) constant of motion---Clairaut adiabatic invariant.

\section{Alternating collisions and reversal}
\begin{remark}[Wide horns]\label{REMWideHorns}
With notations as in \cref{FIGHorn}, consider a collision at $(x,f(x))$. The segment to the left has negative slope iff the slope of the right segment is (negative or) greater than $2f’/(1-(f’)^2)$ (double-angle formula for the tangent function, \cref{FIGWide}). In wide horns, the latter---and hence the former---is necessary for the right segment to meet the dispersing side. This has two consequences. 
\begin{figure}[h]\includegraphics[width=.7\textwidth]{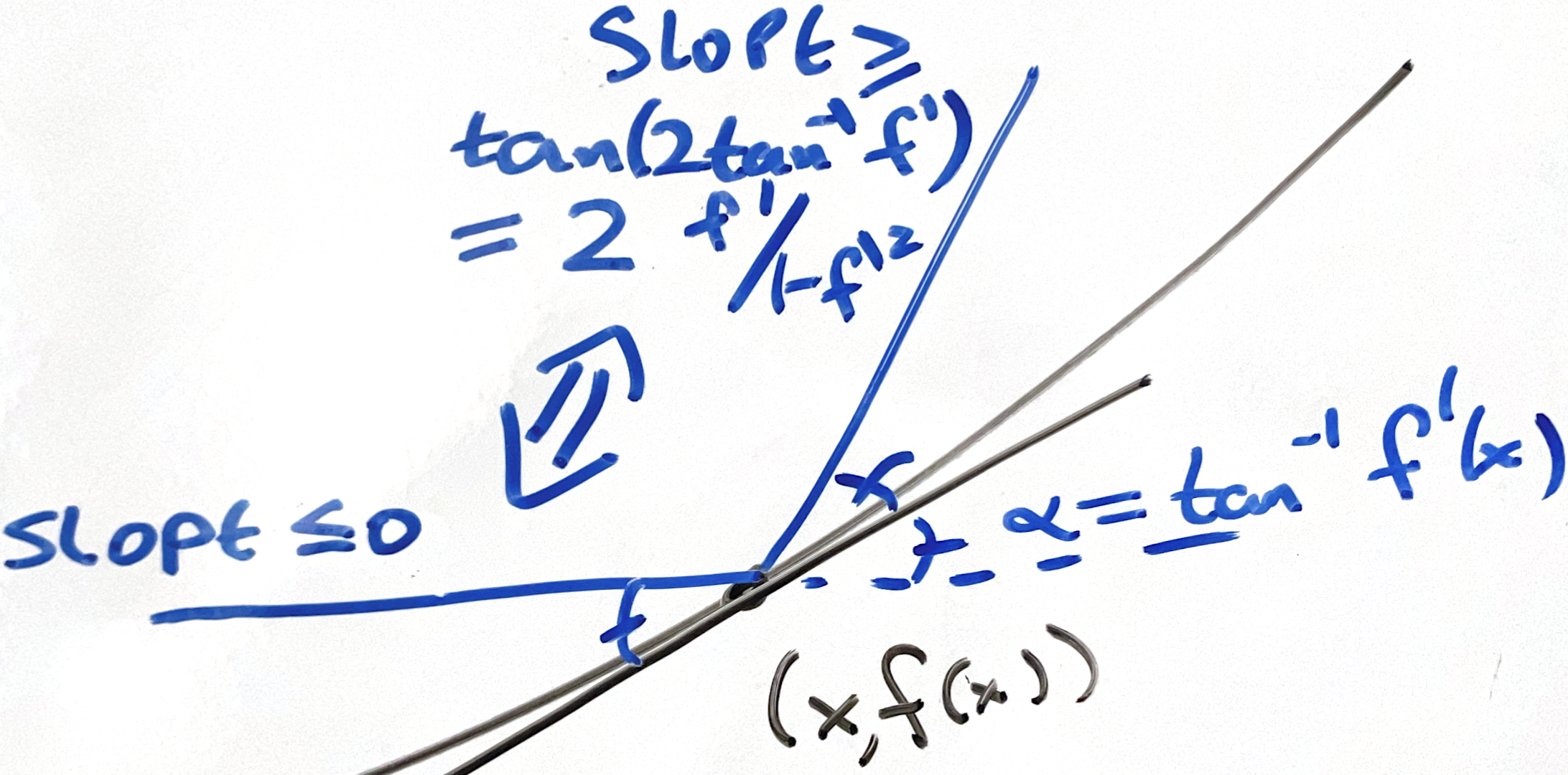}\caption{Picture for ``wide''---should be redone in TikZ, maybe with computations left out and instead done in the body text. Maybe other wise stylistic choices.}\label{FIGWide}\end{figure}

For \emph{leftward} motion: \begin{proposition}
For a wide horn, a collision pair starting inward from the dispersing side looks as in \cref{FIGHorn}:  a trajectory from the dispersing side exits the focusing side with negative slope 
after collision (and hence upward, so the next collision is with the dispersing side).
\end{proposition}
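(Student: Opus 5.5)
We work in the coordinates of the remark on wide horns: the horn opens to the right, with the focusing side $y=f(x)$ below and the dispersing side $y=g(x)$ above, so that $f'>0$ near the cusp and $g-f>0$. "Leftward motion" means $\dot x<0$. The proposition follows from two ingredients: a reflection computation at the focusing side, and the wide-horn hypothesis applied to the incoming segment.

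\emph{The reflection computation.} Parametrize the incoming segment, which travels leftward from the dispersing side to the collision point $P=(x,f(x))$. Let $m_{\rm in}$ be its slope read as a line (the "right segment"), and let $m_{\rm out}$ be the slope of the outgoing "left segment". The tangent at $P$ has angle $\beta=\arctan f'$. Reflection across the tangent sends a line of angle $\phi$ to one of angle $2\beta-\phi$. Therefore
\[
m_{\rm out}=\tan(2\beta-\phi),\qquad \tan 2\beta=\frac{2f'}{1-(f')^2}.
\]
Since $f'$ is small near the cusp, $2\beta$ is a small positive angle. Hence $\tan(2\beta-\phi)<0$ exactly when $\phi\in(2\beta,\pi/2)$ or $\phi<0$. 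This reproduces the dichotomy in the remark: the left segment has negative slope iff $m_{\rm in}$ is negative or greater than $\tan2\beta$. We also check that the outgoing direction points leftward and into the horn. Orientation is preserved along the segment and reflection keeps the horizontal direction, provided $\phi$ stays away from $\beta+\pi/2$; this holds here since the particle enters moving left and exits above the wall.

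\emph{Applying the wide-horn hypothesis.} The incoming segment left the dispersing side at some point $(x_1,g(x_1))$ with $x_1>x$ and hit $(x,f(x))$. Its slope as a line is the secant slope
\[
m_{\rm in}=\frac{g(x_1)-f(x)}{x_1-x}>0,
\]
which is positive because $g>f$. It therefore suffices to show $m_{\rm in}>\tan 2\beta$. This is exactly where "wide" enters, and I would take it as the defining property, in the form stated in the remark. Concretely, the relevant inequality is that any chord from a point of the dispersing side to $(x,f(x))$ further left has slope exceeding $2f'(x)/(1-f'(x)^2)$. For instance, $g(x_1)-f(x)>2f'(x)(x_1-x)\,(1+o(1))$ would follow if $g-f$ dominates $2f'$ times the horizontal distance. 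This is the step I expect to be the main obstacle, because one must pin down the precise meaning of "wide" and verify it uniformly near $O$. I would first try the crude bound $g(x_1)-f(x)\ge g(x_1)-f(x_1)+\min f'\,(x_1-x)$, combined with convexity of $f$, to compare against $2f'(x)$.

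\emph{Conclusion.} With $m_{\rm out}<0$ and leftward motion, $y$ increases along the outgoing segment. Since the focusing side lies below near $P$ and the particle moves upward away from it, the next boundary it meets before returning to $f$ is the dispersing side $g$. This gives the configuration of \cref{FIGHorn}.
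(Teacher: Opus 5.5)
Your argument is the paper's: the double-angle reflection criterion reduces a negative outgoing slope to $m_{\rm in}>2f'/(1-(f')^2)$. Wideness is then invoked as exactly the property that chords from the dispersing side are this steep, which the paper also just asserts. (Your ``$\phi<0$'' branch of the equivalence is wrong for small negative $\phi$, but it is irrelevant, since $m_{\rm in}>f'(x)>0$.)

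Don't try to derive that property from convexity. For $f\approx ax^2$, $g\approx bx^2$ the least chord slope is about $2x\bigl(b+\sqrt{b(b-a)}\bigr)$ against $\tan2\beta\approx4ax$, so the property genuinely requires a curvature gap, roughly $b>\tfrac43a$.
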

For \emph{rightward} motion: \begin{proposition}\label{PROPRightward}
Two successive rightward collisions with the focusing side cannot be followed by a collision with the dispersing side, and since $f'<1$ in a wide horn, the trajectory then exits the horn monotonically (either the collisions eventually stop, or the slopes remain less than 1; there is a lower bound on horizontal velocity regardless).
\end{proposition}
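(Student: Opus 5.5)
The plan is to work in the coordinates of \cref{REMWideHorns}, with the focusing side the graph $\{y=f(x)\}$, the cusp to the left, and $f'>0$. I will track the \emph{direction angle} of each trajectory segment rather than its slope. Write $\beta(x)\dfn\arctan f'(x)$ for the tangent angle of the focusing side. The reflection law at a point $(x,f(x))$ sends an incoming direction angle $\sigma$ to the outgoing angle $2\beta(x)-\sigma$. The criterion of \cref{REMWideHorns} says that a rightward segment leaving $(x,f(x))$ can reach the dispersing side only if its slope exceeds $2f'/(1-(f')^2)$. Since $f'<1$, we have $\beta<\pi/4$, so $\tan$ is increasing on $(-\pi/2,2\beta]$ and this condition is equivalent to: outgoing angle $>2\beta(x)$.

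\textbf{Step 1 (the chord angle).} Let the two successive rightward collisions with the focusing side be at $P_1=(x_1,f(x_1))$ and $P_2=(x_2,f(x_2))$, with $x_1<x_2$. The segment $P_1P_2$ is a chord of the graph of $f$. By the mean value theorem its angle $\sigma$ equals $\beta(\xi)$ for some $\xi\in(x_1,x_2)$. Because the side is focusing, $f$ is convex, so $\beta$ is increasing and $0<\beta(x_1)\le\sigma\le\beta(x_2)$. Positivity of $\sigma$ uses $f'>0$ near the tip, i.e.\ that the focusing side rises away from the cusp.

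\textbf{Step 2 (no dispersing collision next).} The segment leaving $P_2$ has angle $2\beta(x_2)-\sigma$. From Step 1, $\sigma>0$ gives $2\beta(x_2)-\sigma<2\beta(x_2)$, and $\sigma\le\beta(x_2)$ gives $2\beta(x_2)-\sigma\ge\beta(x_2)$. The upper bound, together with the wide-horn criterion, shows that the segment cannot reach the dispersing side. This proves the first claim. The lower bound shows the segment leaves the focusing side into the horn and moves rightward, since its angle lies in $[\beta(x_2),2\beta(x_2))\subset(0,\pi/2)$.

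\textbf{Step 3 (monotone exit).} Any further collision is again with the focusing side, at some $x_3>x_2$, and the new incoming chord angle lies in $[\beta(x_2),\beta(x_3)]$. The same two-line argument therefore applies inductively: every subsequent segment has angle in $(0,2\beta_{\max})$, where $\beta_{\max}=\arctan\sup f'<\pi/4$. Hence the slopes stay in $(0,1)$ and the horizontal velocity is at least $\cos(2\beta_{\max})>0$. Consequently $x$ increases at a uniform linear rate and the trajectory leaves the horn in finite time.

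I expect the main obstacle to be making the last step airtight in the case where the collisions do not stop. Finitely many collisions in finite time is excluded by the $C^3$ assumption via \cite{Halpern}, and the uniform horizontal speed handles the case of infinitely many collisions accumulating in finite time. A second point to check is that the wide-horn criterion of \cref{REMWideHorns} is genuinely stated for all rightward segments leaving the focusing side, so that the strict inequality (angle $<2\beta$) is exactly what excludes hitting the dispersing side. If it only holds near the tip, the statement should be restricted to a sufficiently small neighborhood of the cusp.
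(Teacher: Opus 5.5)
Your proposal is correct and follows the paper's own reasoning: the double-angle reflection criterion from the wide-horn remark, applied with the positive slope of the chord joining two successive focusing collisions, keeps the outgoing slope below $2f'/(1-(f')^2)$, and iterating this with $f'<1$ gives monotone exit. Two repairs are needed: an angle below $2\beta_{\max}<\pi/2$ only gives slopes below $\tan 2\beta_{\max}$, which can exceed $1$, so the claim that slopes stay below $1$ should instead come from your Step 1 (every segment ending at a further focusing collision is a chord, with slope $f'(\xi)<1$, and only the final exit segment may be steeper); and in your last paragraph, Halpern's $C^3$ result is what excludes infinitely many collisions in finite time (``Finitely many'' should read ``Infinitely many''), after which the horizontal-speed bound $\cos 2\beta_{\max}$ forces exit in finite time.
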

This means that the history of all trajectories which enter a wide horn can be described in the same terms: \emph{There is a possible initial period of finitely many successive collisions with the focusing side. From the first collision with the dispersing side, collisions strictly alternate between sides (and with alternating slopes), either indefinitely (which \cref{MainProp} will rule out)  or until a possible final period of finitely many successive collisions with the focusing side leading to monotone exit from the horn.}
\end{remark}
\section{Horizontal displacement}
With the notations of Figure~\ref{FIGHorn} write \(r\dfn g-f\). For line segments as in \cref{FIGHorn} one can control the horizontal separation of end-points in terms of $r$ at the right end-point.\COMMENT{This section might not use ``wide''}
\begin{figure}[ht]\includegraphics[height=.3\textwidth]{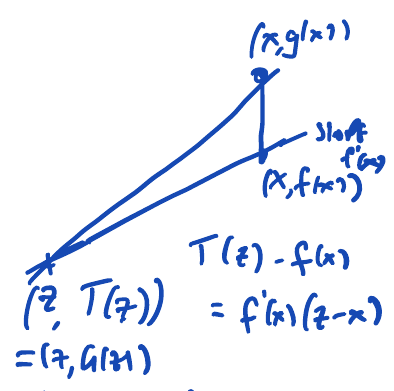}\qquad\includegraphics[height=.3\textwidth]{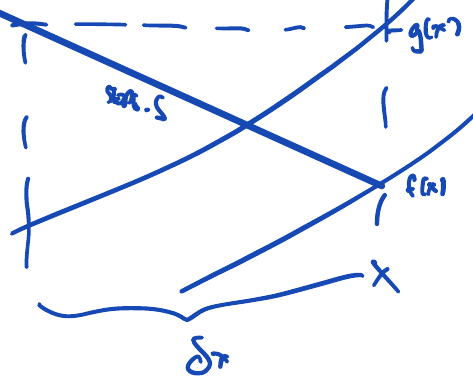}\caption{\cref{LEMDeltaX+,{LEMDeltaX-}}}\label{FIGDeltaX}
\end{figure}
\begin{lemma}\label{LEMDeltaX+}
Consider a line segment with slope $S>f'(x)$, left end-point $(y,f(y))$ and right end-point $(x,g(x))$. Then $r(x)>(S-f'(x))\Delta x$, where $\Delta x\dfn x-y$.
\end{lemma}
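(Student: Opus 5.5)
Write the segment as the graph of the affine function $\ell(t)\dfn f(y)+S(t-y)$ on $[y,x]$. It starts on the focusing side at $t=y$ and ends on the dispersing side at $t=x$, so $\ell(x)=g(x)$. Hence
\[
r(x)=g(x)-f(x)=\ell(x)-f(x)=\bigl(f(y)-f(x)\bigr)+S\Delta x .
\]
The claim $r(x)>(S-f'(x))\Delta x$ is therefore equivalent to
\[
f(y)-f(x)+f'(x)\Delta x>0, \qquad\text{that is,}\qquad f(y)>f(x)+f'(x)(y-x).
\]
This says that $f$ lies strictly above its tangent line at $x$, evaluated at the point $y<x$. So the whole statement reduces to a convexity property of the focusing side.

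The key step is to use the orientation in \cref{FIGHorn}. There the focusing side is the graph of $x=f(y)$ with the horn opening to the right, and $f$ is strictly convex near the tip. (If the figure's convention made $f$ concave, the inequality would reverse, so this sign convention is what I must check first.) Strict convexity gives, by the tangent-line inequality, $f(y)-f(x)-f'(x)(y-x)=\tfrac12 f''(\xi)(y-x)^2>0$ for some $\xi\in(y,x)$, whenever $y\ne x$. The segment cannot be degenerate, since its left end is on the focusing side and its right end on the dispersing side, and these sides meet only at the cusp. So $\Delta x>0$ and the inequality is strict.

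The hypothesis $S>f'(x)$ is not needed for the inequality itself. It only makes the right-hand side positive, so that the lemma becomes a genuine upper bound $\Delta x<r(x)/(S-f'(x))$ on the horizontal displacement. The main obstacle I expect is getting the sign conventions right: which side is $f$, that $f''>0$ in the chosen coordinates, and that $y<x$ with $\Delta x>0$. Once these are matched to \cref{FIGDeltaX}, the proof is one line. If the convexity were instead only local (valid in a neighborhood of the cusp), I would restrict to that neighborhood, as \cref{lem:key} does.
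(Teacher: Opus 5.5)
Your proof is correct and is the paper's argument in algebraic form. The paper extends the segment leftward to the tangent line at $(x,f(x))$, obtains the identity $r(x)=(S-f'(x))\delta x$, and then uses, without saying so, that the convex focusing side lies above that tangent line to get $\delta x>\Delta x$; this is exactly your inequality $f(y)>f(x)+f'(x)(y-x)$. You make that convexity explicit, and you rightly observe that $S>f'(x)$ only serves to make the paper's leftward extension meet the tangent line and to make the bound nontrivial. One slip: the focusing side in this lemma is the graph $t\mapsto(t,f(t))$, not $x=f(y)$, as your own formula for $\ell$ already assumes.
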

\begin{proof}
Extend the segment leftward to the point $(z,f(x)+f'(x)(z-x))$ on the tangent line at $(x,f(x))$. Then $\delta x\dfn x-z>\Delta x$, and \(g(x)+S(z-x)=f(x)+f'(x)(z-x)\), sow
\[r(x)=g(x)-f(x)=(f'(x)-S)(-x)=(S-f'(x))\delta x.\qedhere\]
\end{proof}
\begin{lemma}\label{LEMDeltaX-}
For a line segment with slope $-S<0$ and right end-point $(x,f(x))$, we have $r(x)>S\Delta x$.
\end{lemma}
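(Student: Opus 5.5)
Lemma \ref{LEMDeltaX-} is the mirror image of Lemma \ref{LEMDeltaX+}, and I would prove it the same way: extend the segment until it meets a straight line, and compute the horizontal distance with elementary algebra. The setup follows \cref{FIGHorn}. The focusing side is the graph of $f$ and the dispersing side is the graph of $g$, with $g>f$ inside the horn, so $r=g-f>0$. The segment has slope $-S<0$ and right end-point $(x,f(x))$. Its left end-point $(y,\cdot)$, with $\Delta x=x-y>0$, lies on the boundary of the horn and hence at height at most $g(y)$. (As drawn in \cref{FIGDeltaX}, it is a collision point on the dispersing side.)

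\textbf{Step 1: a reference line.} I will use the horizontal line $h=f(x)+r(x)=g(x)$ through $(x,g(x))$. As in Lemma \ref{LEMDeltaX+}, the reference line is built from data at $x$. The segment satisfies
\[
\ell(z)=f(x)+S(x-z),
\]
so it reaches the height $g(x)$ exactly at $z_0=x-r(x)/S$. Hence $r(x)=S(x-z_0)$.

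\textbf{Step 2: compare $\Delta x$ with $x-z_0$.} It remains to show $x-z_0>\Delta x$, that is, $z_0<y$. This means that on $[y,x]$ the segment stays strictly below the height $g(x)$. Equivalently, the left end-point satisfies $\ell(y)<g(x)$.

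Since the left end-point is on the boundary, $\ell(y)\le g(y)$. In the horn orientation of \cref{FIGHorn}, the tip is to the left and the dispersing side $g$ is increasing in $x$ near the tip. This is the standing convention that makes Lemma \ref{LEMDeltaX+} work, and the fact that both boundary tangents have positive slope gives it. For $y<x$ we then get $g(y)<g(x)$, so $\ell(y)<g(x)$. Because $\ell$ is decreasing, $\ell$ reaches the value $g(x)$ only at some $z_0<y$. This gives $\Delta x=x-y<x-z_0=r(x)/S$, which is the claim. If $\ell(y)$ equals $g(y)$ only up to boundary effects, the strict inequality $g(y)<g(x)$ still suffices.

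\textbf{Main obstacle.} The only nontrivial point is justifying that monotonicity, namely that $g$ is increasing on the relevant range, or at least that $g(y)<g(x)$. I would first try to get it from working in a sufficiently small neighborhood of $O$ with the tangent direction at $O$ chosen so that $g'>0$ there. If the paper's orientation instead has $g'<0$, I would redo Step 1 using the tangent line to $g$ at $x$ rather than the horizontal line. The segment of slope $-S$ lies below the concave-side boundary, and comparing it with the tangent line would give the same inequality with $S$ replaced by $S+g'(x)$. The wide-horn hypothesis $|f'|,|g'|<1$ would then control the correction. I expect the intended proof to be the two-line horizontal-line version.
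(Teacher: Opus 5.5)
Your proposal is correct and is essentially the paper's proof: extend the segment leftward to the point $(z,g(x))$ on the horizontal line through $(x,g(x))$, so that $r(x)=S\,\delta x$ with $\delta x=x-z$, and conclude from $\delta x>\Delta x$. The paper takes $\delta x>\Delta x$ from \cref{FIGDeltaX}. You justify it via $\ell(y)\le g(y)<g(x)$, which holds because $g$ is increasing on $(0,x]$ (it is convex with a horizontal tangent at the tip), so the fallback for $g'<0$ in your last paragraph is not needed.
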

\begin{proof}
Extend the segment leftward to the point $(z,g(x))$, and let $\delta x=x-z$. Then $g(x)=f(x)+S\delta x$, so $r(x)=g(x)-f(x)=S\delta x>S\Delta x$.
\end{proof}
\section{Horizontal velocity}
We now compute the change in horizontal velocity brought about by collisions. 
\begin{figure}[h]\includegraphics[width=.7\textwidth]{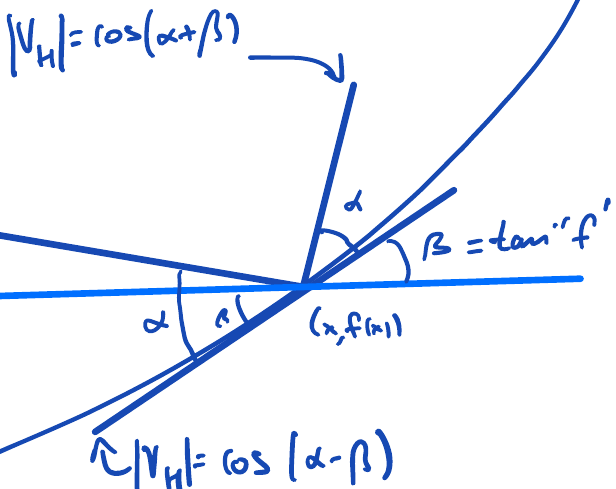}\caption{A collision with the focusing side}\label{FIGFocusing}
\end{figure}
With the notations of \cref{FIGFocusing}, which shows a  collision with the focusing side such as in \cref{FIGHorn}, the difference in horizontal velocity is
\[\cos(\alpha-\beta)-\cos(\alpha+\beta)=2\sin\alpha\sin\beta.\]
Here, $\sin\beta=\sin\tan^{-1}f'=\frac{f'}{\sqrt{1+(f')^2}}$. Thus, 
\[\Delta v=\frac{2f'}{\sqrt{1+(f')^2}}\sin\alpha,\]
with $\alpha$ being the collision angle. A like formula holds for collisions with the dispersing side, with reversed sign.
\section{Reversal and exit}
We now show that a trajectory coming into a wide horn must reverse horizontal velocity and hence exit the horn. 
\begin{proposition}\label{MainProp}
An alternating sequence of collisions in a wide horn accumulates arbitrarily large positive velocity change and must hence result in eventual exit after finitely many collisions.
\end{proposition}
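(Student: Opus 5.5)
We argue by contradiction in the horizontal coordinates of \cref{FIGHorn}. Suppose the collisions alternate between the two sides indefinitely. By \cref{REMWideHorns} the slopes then also alternate in sign, and the trajectory stays in the horn, moving leftward toward the tip. The goal is to show that the horizontal velocity $v_H$ becomes positive, after which \cref{PROPRightward} gives monotone exit. We track $v_H$ over a \emph{collision pair}: one collision with the dispersing side at $x_n$, followed by one with the focusing side at $y_n<x_n$. By the computation in the section on horizontal velocity,
\[
\Delta v_H=\frac{2g'(x_n)}{\sqrt{1+g'(x_n)^2}}\sin\alpha_n-\frac{2f'(y_n)}{\sqrt{1+f'(y_n)^2}}\sin\alpha_n',
\]
where $\alpha_n,\alpha_n'$ are the collision angles. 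The sign convention must be fixed so that the dispersing collision repels.

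The first step is to make the two terms comparable. Because the collisions alternate with alternating slopes, the reflection law relates the incoming and outgoing angles at each wall. This gives $\sin\alpha_n'=\sin\alpha_n+O(|f'|+|g'|)$, so the two angles differ only by the wall slopes. Near the tip, $g'-f'\approx r'$ with $r=g-f\sim\tfrac12(R_-^{-1}-R_+^{-1})x^2$, so $r'>0$ is comparable to $x$. Hence over one pair
\[
\Delta v_H\gtrsim \sin\alpha\,(g'-f')\ge c\,x_n\sin\alpha_n,
\]
provided the cross errors, such as $f'(y_n)-f'(x_n)=O(\Delta x)$, are of smaller order. We control these errors with \cref{LEMDeltaX+,LEMDeltaX-}: for the segments in a pair with slope of size $S\sim\tan\alpha$ they give $\Delta x\lesssim r(x)/S$, and $r(x)=O(x^2)$. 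This matches the heuristic $\dot v_V=v_H^2 r'/r$ recorded after the adiabatic section.

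The second step is a lower bound on the number and position of the pairs. The transverse speed $\sin\alpha$ cannot become small without $v_H$ already being close to $1$ in absolute value, and the dependence on $\alpha$ needs care here. Then \cref{LEMDeltaX+,LEMDeltaX-} give $x_n-x_{n+1}\le C x_n^2$ for the positions. Inverting, as in the proof of \cref{thm:main}, yields $x_n\ge 1/(Cn+x_0^{-1})$, so $\sum_n x_n=\infty$. Meanwhile the per-pair gain is at least $c\,x_n\sin\alpha_n$.

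The hardest step is keeping $\sin\alpha_n$ bounded below along the sequence, which is what makes the gains sum to infinity. I would first try monotonicity. If $v_H<0$ throughout, the increments are positive, so $|v_H|$ decreases and $\sin\alpha=\sqrt{1-v_H^2}$ is nondecreasing, hence bounded below by its initial value. The divergent sum then forces $v_H$ to become positive after finitely many pairs. This contradicts the assumption that the trajectory keeps moving leftward, and \cref{PROPRightward} then gives monotone exit. The delicate point is checking that the $O(\Delta x)$ cross-term errors cannot overturn the sign when $\alpha$ is close to $0$ or to $\pi/2$. If this fails, I would fall back on the angular-momentum increment \eqref{eq:deltaL} together with the summability argument from the proof of \cref{thm:main}.
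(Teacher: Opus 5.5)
Your outline works and uses the same mechanism as the paper's proof. In both, there is a net gain in horizontal velocity over each collision pair, driven by $r'=g'-f'>0$, and the horizontal-displacement lemmas bound the step. The bookkeeping differs, and the step you flag as delicate is not actually delicate.

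\textbf{Comparison with the paper.} The paper pairs a focusing collision at $x_n$ with the following dispersing one at $x_{n+1}<x_n$. It uses the simplified increments $-f'(x_n)$ and $g'(x_{n+1})$, dropping the factors $2\sin\alpha$. It must therefore beat the unfavorable cross term $f'(x_{n+1})-f'(x_n)=-f''(x_n^*)\Delta x_{n+1}$. It does this with convexity, $r'(x)>r(x)/x$, and $r>\Delta x/c$, giving a gain $\gtrsim\Delta x_{n+1}/x_{n+1}$ per pair and then $\ln x_0-\ln x_{N+1}\to\infty$. This needs only convexity of $r$, not a curvature gap. But it tacitly assumes $x_N\to0$, and it relies on a lower bound on slopes that is only asserted to persist ``recursively''.

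You instead pair a dispersing collision with the following focusing one and sum $x_n$ itself. You use $r'\asymp x$ and $x_n-x_{n+1}\le Cx_n^2$ to get $x_n\gtrsim 1/n$, as in the proof of the main theorem. This uses the curvature gap, but it also handles accumulation of collisions at some $x^*>0$.

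\textbf{The delicate step.} With your grouping, the cross term $f'(x_n)-f'(y_n)$ is favorable. The sign issue you worry about disappears if you track the angle $\gamma_n\in(0,\pi/2)$ between the horn's axis and the segment arriving at the dispersing side at $x_n$. The reflection law gives exactly
\[
\gamma_{n+1}=\gamma_n+2\bigl(\arctan g'(x_n)-\arctan f'(y_n)\bigr)>\gamma_n+2\bigl(\arctan g'(x_n)-\arctan f'(x_n)\bigr)>\gamma_n,
\]
because $y_n<x_n$ and $f'$ is increasing.

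Since $v_H=-\cos\gamma_n$ just before each dispersing collision, this is your monotonicity. It holds with no error terms and no bootstrap, both near grazing and near $\pi/2$, for as long as the motion is leftward. It also shows that the segment leaving the focusing side rises again, so alternation persists without appeal to the wide-horn remark.

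Then $\gamma_n\ge\gamma_1>0$ gives $x_n-x_{n+1}\le 2g(x_n)/\tan\gamma_1\le Cx_n^2$, and each increment of $\gamma$ is at least $c\,x_n$. So $\sum x_n=\infty$ forces $\gamma_n$ past $\pi/2$, which is reversal. Your fallback to angular momentum is unnecessary.
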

\begin{proof}
\cref{LEMDeltaX+,{LEMDeltaX-}} provide a decreasing (this will be important for recursive application) function $c$ of the absolute value of the slope of a trajectory segment for which \(|\Delta x|<cr(x)\).

Denote by \(v\) the horizontal velocity. Then at an 
\(f\)-collision we have \(\Delta v_n=-f'(x_n)\)\COMMENT{Slope$\approx$angle here!} and at a \(g\)-collision we have \(\Delta v_{n+1}=g'(x_{n+1})\). For a collision pair as in Figure~\ref{FIGHorn} we then find the change in horizontal velocity to be
\[\Delta v_{n+1}+\Delta v_n=g'(x_{n+1})-f'(x_n)=\underbracket{r'(x_{n+1})}_{\text{\Tiny dominant}}+\underbracket{f'(x_{n+1})-f'(x_n)}_{\mathclap{\qquad=-f''(x_n^*)\Delta x_{n+1}\strut\text{ \Tiny(Mean-Value Theorem)}}}.\]
Since \(r\) is convex and \(r(0)=0\), we have \({r'(x)}>r(x)/x\), the slope of the secant line through \((0,0)\), and hence \(r'(x_{n+1})>\frac{r(x_{n+1})}{x_{n+1}}>\frac{\Delta x_{n+1}}{cx_{n+1}}\). Then
\begin{equation}\label{EQPairCollision}
\Delta v_{n+1}+\Delta v_n=r'(x_{n+1})+f'(x_{n+1})-f'(x_n)>\frac{\Delta x_{n+1}}{x_{n+1}}\Big(\underbracket{\frac1c-\overbracket{x_{n+1}}^{\smash{\Tiny\text{small}}}\sup f''}_{>1/C>0}\Big).
\end{equation}
Note that we can recursively use \(|\Delta x_n|<cr(x_n)\) in subsequent steps.\COMMENT{Say something simple about why. While moving inward, the inequality gets better, while moving outward it remains true unless the slope is so small that one exits without dispersing collision.}
Sum this over \(N\) collision pairs to get the total \(x\)-velocity change \(\Delta v\):
\[ 
C\Delta v>\sum_{k=1}^N\frac{\Delta x_k}{x_k}\ge  \sum_{k=1}^N\int_{x_{k+1}}^{x_k}\frac{dx}{x} =\ln x_0-\ln\underbracket{x_{N+1}}_{\to0^+}  \lto{N\rightarrow\infty}\infty.
\qedhere\]
\end{proof}
\section{Adiabatic invariant}
[Import material from \cref{sec:adiabatic}]
\newline\textbullet\ ``Dispersing'' collision:\newline\vskip-8ex
\[
\frac{\Delta v}{ \Delta t}={\frac{\cos\theta_{n-1}-\cos\theta _n}{(g_n-f_n)/v_\perp}\Buildrel{\Tiny\text{Reflection}}{=}
v_\perp
\frac{(\overbracket{\sin\theta_{n-1}}^{\mathclap{\approx\sin\theta_n}}+\overbracket{\sin\theta _n}^{\mathclap{=v_\perp}})g'_n}{g_n-f_n}
\approx}\frac{2v_\perp^2\boxed{g'_n}}{g_n-f_n}.
\]
\textbullet\ ``Focusing'' collision: \(\displaystyle\frac{\Delta v}{ \Delta t}\Buildrel{\Tiny\text{Reflection}}{=}\dots\approx 2v_\perp^2\frac{\boxed{-f'_n}}{g_n-f_n}\hfill r\dfn g-f\).\newline \textbullet\ Average these:\quad
\(\displaystyle
\frac{\Delta v}{\Delta t}\approx v_\perp^2\frac{g_n'-f'_n}{g_n-f_n}\lto{\textbf{Continuum approximation}}r\dot v=v_\perp^2r'\)

\(I\dfn rv_\perp=\) constant: \(\displaystyle v_\perp\frac{dI}{dt}=\underbracket{v_\perp\big(r'\dot xv_\perp}_{\mathclap{=v{v_\perp^2r'=r\dot v}v}}+r\dot v_\perp\big)=r(\underbracket{v\dot v+v_\perp\dot v_\perp}_{\mathclap{=0\rlap{ \tiny since \(v^2+v_\perp^2\equiv1\)}}})=0\)
\vskip 1 in

\section*{\strut\color{blue}\vrule width .9\textwidth}\color{blue}
{\Large Text above here is intended for publication, text below for moving up as needed and eventual deletion.}
Next we produce a portion of the horn which every outward trajectory must leave.\COMMENT{This seems to be done in \cref{PROPRightward} already.}
\begin{proposition}
Consider \(D\in(0,1)\) such that if \(x\in(0,D)\), then \begin{itemize}
    \item \(f'(x)<1\)\COMMENT{This follows from ``wide''} and
    \item a half-line \((x,f(x))+\mathbb R^+(1,1)\) intersects either the graph of \(g\) over \([0,D]\) or the segment \(\{D\}\times[f(D),g(D)]\).\COMMENT{This seems to follow from the previous item.}
\end{itemize} 
Then any trajectory starting in 
\[H_D\dfn\big\{(x,y)\in[0,D]\times[0,\infty)\st f(x)\le y\le g(x)\big\}\]
with positive horizontal velocity leaves \(H_D\).
\end{proposition}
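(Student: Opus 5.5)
Between collisions the particle moves in a straight line, and each reflection changes only the normal component of the velocity. So the plan is to show that once the horizontal velocity is positive in $H_D$, it stays bounded below by a positive constant. Then the $x$-coordinate increases at a definite rate and must pass $D$ in finite time.

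First I would fix the geometry. The focusing side $y=f(x)$ lies below the dispersing side $y=g(x)$. For $x\in(0,D)$ we have $f'(x)\in(0,1)$ and $g'>f'$. An outward trajectory segment has direction $(\cos\phi,\sin\phi)$ with $\cos\phi>0$. I would then treat the possible collisions separately.

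\emph{Collision with the focusing side.} The tangent there makes angle $\beta=\tan^{-1}f'(x)<\pi/4$ with the horizontal. The post-collision direction is $2\beta-\phi$, where $\phi$ is the pre-collision angle. If the particle moves rightward and hits the lower side, it comes from above, so $\phi<\beta$. Hence the new angle $2\beta-\phi$ lies in $(\beta,\pi/2)$, because $2\beta<\pi/2$ and $\phi>-\pi/2$. So the horizontal velocity stays positive, and the outgoing slope exceeds $f'$.

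\emph{Collision with the dispersing side.} The reflected angle is $2\beta_g-\phi$, with $\beta_g=\tan^{-1}g'$. Since the particle hits the upper side from below, $\phi>\beta_g$, so the new angle is below $\beta_g$. It could be very negative, and this is the case I need to control.

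This is where the second hypothesis enters. I would argue as in \cref{PROPRightward} and \cref{REMWideHorns}: after a collision with the focusing side, the outgoing ray has slope between $f'$ and something controlled. I would compare it with the half-line $(x,f(x))+\mathbb R^+(1,1)$. If the outgoing slope is at most $1$, the segment lies below that half-line. By the hypothesis, it then either exits through $\{D\}\times[f(D),g(D)]$ or hits the dispersing side at a shallow angle. If it hits the dispersing side, the reflected direction has slope at least $2g'-1$ in angle terms, which keeps the horizontal component bounded below. The key claim to establish is an invariant cone of directions: $\{\phi\in(-\pi/2+\delta,\pi/4]\}$, suitably adjusted, for rightward motion. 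I would verify that both reflection rules map this cone into itself, using $f'<1$ and the fact that $g'$ is small near the cusp.

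The main obstacle is a steep initial direction, with slope above $1$, whose reflection off the dispersing side sends the particle back down steeply and possibly leftward. To handle it, I would first use the earlier analysis to show that at most finitely many collisions occur before the direction enters the invariant cone. With the cone established, $\dot x\ge\cos(\pi/4)$ on all remaining segments. By the Halpern-type fact already cited, there are no infinitely many collisions in finite time, so $x(t)$ reaches $D$ within time at most $\sqrt2 D$ and the trajectory leaves $H_D$.
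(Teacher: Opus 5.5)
Your argument fails at the sign analysis of the two reflections, and the invariant-cone plan inherits the error. At the focusing (lower) wall you conclude $2\beta-\phi\in(\beta,\pi/2)$ ``because $2\beta<\pi/2$ and $\phi>-\pi/2$''. Those two facts only give $2\beta-\phi<2\beta+\pi/2$, and the outgoing direction is leftward as soon as $\phi<2\beta-\pi/2$; for example, $\beta=\pi/8$, $\phi=-7\pi/16$ gives $11\pi/16$. In fact $\cos(2\beta-\phi)-\cos\phi=-2\sin\beta\,\sin(\beta-\phi)<0$, because $\beta-\phi\in(0,\pi)$. So every $f$-collision \emph{decreases} $\dot x$: the velocity jump is a positive multiple of the inward normal $(-f',1)$, which points toward the tip. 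This is the competition described in the paper's remark on angular versus linear momentum, where the focusing side attracts toward the cusp.

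The same identity at the dispersing wall, where $\beta_g-\phi\in(-\pi,0)$, shows that $g$-collisions always \emph{increase} $\dot x$. So the case you single out as needing control is harmless. Consequently no fixed cone of directions is preserved by each reflection separately. For your cone $(-\pi/2+\delta,\pi/4]$:
the $f$-reflection sends $\phi$ near $-\pi/2+\delta$ to about $2\beta+\pi/2-\delta$, which lies outside the cone when $\delta<\pi/4+2\beta$ (and is leftward when $\delta<2\beta$);
if instead $\delta\ge\pi/4+2\sup\beta$, the $g$-reflection of $\phi=\pi/4$, namely $2\beta_g-\pi/4$, leaves the cone wherever $\beta_g\le\sup\beta$, in particular near the tip.
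Small $g'$ makes this worse, not better, and $D$ is not assumed small anyway. Two further steps also fail. A cone whose lower edge is near $-\pi/2$ does not give $\dot x\ge\cos(\pi/4)$. And nothing available shows that only finitely many collisions precede entry into the cone: the angular-momentum argument of the main theorem works only in a small neighborhood of the tip, under curvature hypotheses not assumed here.

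The missing idea is to treat the one dangerous event, an $f$-collision producing a steep outgoing segment, together with the collision that follows it. You also use the half-line hypothesis in the wrong direction: a segment lying below the slope-$1$ half-line may well return to $f$. What the hypothesis provides (and what already follows from $f'<1$, since $f(t)-f(x)<t-x$ for $x<t\le D$) is this: a ray from $(x,f(x))$ that is steeper than that half-line, or that goes up and to the left, cannot come back to $f$. (Leftward outgoing rays are necessarily up-left, since the outgoing angle lies in $(\beta,2\beta+\pi/2)\subset(0,\pi)$.) Such a ray therefore either leaves $H_D$ or hits $g$ next.

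The paper's proof runs on exactly this. First, $g$-collisions raise $\dot x$. Second, if an $f$-collision leaves with $\dot x<1/\sqrt2$ (possibly negative), the next collision is with $g$. The collision-pair estimate from the proof of the reversal-and-exit proposition then shows that this $f$--$g$ pair has positive net effect on $\dot x$. Third, if the $f$-collision leaves with slope at most $1$ and next hits $f$ again, the chord has slope $f'(\xi)\in(0,1)$ for some intermediate $\xi$, so $\dot x\ge1/\sqrt2$. This yields the lower bound $\min(\dot x_0,1/\sqrt2)$, away from the steep segments inside such pairs, without any invariant cone. To repair your argument you need this pairing, or another estimate showing that the gain at the $g$-collision dominates the loss at the preceding $f$-collision.
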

\begin{proof}
We will show that the horizontal velocity has a positive lower bound. 

If the first collision is with \(g\), then this increases the horizontal velocity. 

If the first collision is with \(f\), then there are two cases to consider. If the post-collision horizontal velocity is less than \(1/\sqrt2\) (this includes possibly negative horizontal velocity), then the subsequent collision is with \(g\), and this collision pair increases the horizontal velocity, see \eqref{EQPairCollision}. 

So long as subsequent collisions are again with \(f\), the trajectory segments have slope less than 1 and hence horizontal velocity at least \(1/\sqrt2\). 

The promised lower bound on horizontal velocity is therefore the lesser of the initial horizontal velocity and \(1/\sqrt2\).
\end{proof}
Together, these last two propositions prove that a trajectory which enters \(H_D\) leaves after finitely many colllisions.
\section{Alternating collisions}
If collisions start as in Figure 1, vertical velocities increase after each collision, and we can repeat similar contradiction as in Prop. 4 to get alt. collisions. 

Plan: Use (1) below to establish alternating collisions, then a recursion can start to obtain the desired contradiction.

Problem: that computation seems to have a problem when all segments in Figure 1 have positive slope. This is likely related to ``not too horizontal.''

Would it help to think about an exactly horizontal segment in that argument? Grazing collisions are also troubling.

In Fig 1, one can approach two limiting cases: 

The left collision might be close to tangent, and then all three slopes are positive---this is the problem situation in the argument below. 

The right collision might be close to tangent, then we get a left collision with the focusing side (a problem), and maybe one can prove that the rightmost segment can't hit the dispersing side, so this is ``too far outside.''

Question: start tangent to dispersing, can one hit focusing only once? (Deep enough in the horn.) E.g., is the next slope nonpositive?? Or at least bounded below by that of the line through the origin? Put differently, is it possible to draw arbitrarily short secant lines from the origin to a point on the focusing side which reflect in the focusing side to a tangent of the dispersing side? (There might be an argument of impossibility if $f'\ge2g'$: the slope of the secant line is $g/x$, so the slope of the reflected line is (very nearly) $2g'-g/x$. This suggests that the idea is too crude.) If not, then this seems to imply alternating collisions deep in the horn. That is to say, it would prove that ``if we start from dispersing, then hit focusing, then we must hit dispersing again.''

For reversal and exit below, a nice next step would be to show that once collisions alternate sides, the slopes of alternating segments alternate signs, as in \cref{FIGHorn}.

On the other hand, maybe that needs a recursive argument.
\section{Old abstract}
{\Tiny{\color{red} This text is written as ``notes to self'' rather than for publication. Accordingly, much is likely unclear or unchecked, and it is not even clear what will eventually be included or whether it is better to start a new \TeX\ file.\par}To-do list:\begin{enumerate}
\item Give sufficient conditions horns to be finitary.
\begin{itemize}
\item So far, we almost have conditions for {\bfseries alternating collisions}
\begin{itemize}
\item Currently these cover horns between parabolas of well separated
  curvature using the slope$\approx$angle approximation (Propositions \ref{PRPy2cy2}, \ref{PRPyrcyr})
\item Section \ref{SECParabolasRefined} improves this to a like result using that the slope$\approx$angle approximation gets ever closer deep in a horn; this seems to cover horns with nonzero curvature if one reframes this as covering ``approximate parabolas'' to achieve full generality for nonzero curvature.
\item The results for zero curvature are tentative in terms of hypotheses.
\end{itemize}
\item From here get to {\bf finitely many collisions}. MAYBE the ideas for heavy cusps help here. Those from Section \ref{SECHyp} make easy work of showing that horns with a large enough curvature gap are finitary.
\item Add details for adiabatic invariant.
\end{itemize}
\item A different question: show that horns do not break {\bfseries hyperbolicity}---which is obvious for cusps. While this seems vaguely plausible with curvature separation, it may not be easy. It may require a close reading of \cite{ChernovMarkarianBook}, say, to understand whether cones are preserved. The challenge is that those works concentrate on having sufficiently long free flight after leaving a focusing piece of the boundary, the length being about the radius of curvature. Working around this appears to be done in a case-by-case manner in the literature, so we might be best off declaring this out of scope save for what we say in Section \ref{SECHyp}. It might be our main job to explain why we do not go into this.
\end{enumerate}
{\color{red} The ``heavy'' paper naturally studied a cusp with a vertical tangent line, but that made \(y\) the independent variable for parametrizations of the boundary pieces. It also made for inconveniently tall pictures. Horizontal horns are preferable in these respects, but much in these notes still comes with the vertical orientation. Accordingly, before getting too far into writing, we should make a decision one way or another what it is going to be.\par}}
\section{What is known}
\begin{itemize}
\item No straight line or parabola hits the (tip of the) horn.
\item This is different if $g'(0)>f'(0)$, a corner.
\item Weiss' ergodic argument \cite{King} presumably shows that \emph{almost} no orbit hits (i.e., converges to) the horn. It should work equally well with gravity. But it is also just as well not to consider gravity for now.
\item King presents a ``souped up'' Weiss argument to prove that (with nonstrict convexity) only direct hits reach a cusp---maybe we should think about whether that argument also works in a horn and with gravity. I suspect (a simple drawing or the examples in \cite{Halpern} to show) that King's souped-up Weiss argument will fail because of the focusing side. Bernstein's argument clearly fails for horns.
\item Collisions need not alternate sides.
\item The reflection law is the same as for a cusp, but $f$-collisions increase the downward velocity; this is the most disconcerting because there can be arbitrarily many successive $f$-collisions. Can this be controlled in any way after a $g$-collision?
\item Does the Lazutkin--Douady Theorem help that there are caustics near the boundary of a \(C^6\) convex billiards?
\item\cite[p.\ 25]{ChernovMarkarianBook} mentions and cite \cite{Halpern}. So to their knowledge the matter was not settled at the time, and the book assumes throughout that there are no horns. This might still be an unknown even without gravity. Markarian tells me he is not aware of anything pertinent. A reverse-lookup of MR references turned up nothing pertinent. Presumably, the book works for billiards in which all horns are finitary---but horns violate one of Wojtkowski's criteria. This motivates the notion of a finitary horn and a search for verifiable sufficient conditions on a horn to make it finitary. And worry about hyperbolicity later.
\item\cite{Bruin} mentions horns, but those appear to be in the sense of
  Wojtkowski, \ie horns of an underlying manifold rather than curved cusps.
\end{itemize}
\section{Questions and thoughts}
\begin{itemize}
\item Can we establish (asymptotically) alternating collisions? That would be a major first step.
\item Can we find an adiabatic invariant? This would be most intriguing.
\item If so, can we use it effectively?
\item How can one use that $g$ is ``more curved'' than $f$? Does one need to assume $f''(0)<g''(0)$ or is $f<g$ enough? Note that for dispersing cusps the former corresponds to $g''(0)\neq0$. It is conceivable that for steep angles and deep in the horn, the imbalance from $f''(0)<g''(0)$ reverses orbits, but shallow angles are hard. For steep angles, collisions alternate sides, and it seems plausible that $g'>f'$ (and maybe $g''>f''$) would make the angles steeper until they reverse.
\item The curvature of the focusing side \(f\) is related to the curve ``where its normals intersect,'' \ie the curve traced out by those points on normals whose distance from \(f\) is the radius of curvature. Does it matter on which side of \(g\) this lies? Such as, for hyperbolicity? When this lies inside the table, then hyperbolicity should follow from standard conditions, such as Wojtkowski's. But this does not happen deep in the horn, unless the curvature is singular.
\item Or do we want to show that the tip of the horn can be reached in finite time?? If so, then one strategy would be to bound the angle with the vertical away from $\pi/2$, the bound depending on initial conditions, of course.
\item If the cusp can be reached in finite time, then this implies infinitely many collisions in that time (otherwise the last one initiates a direct hit, which is impossible).
\end{itemize}
From the previous paper on cusps a few steps suggest desirable items.
\begin{itemize}
\item Collisions alternate sides.
\item Pairs of successive collisions decrease vertical speed\dots
\item\dots at a sufficient rate.
\item ``Monotone exit'': deep enough in the horn we can give an exit angle that leads to exit from that portion of the horn.
\end{itemize}
Collisions don't necessarily alternate sides because trajectories making a small-enough angle with the focusing boundary will have many successive collisions with that side. However, one might hope that sufficiently deep in the horn, a trajectory from the dispersing side will return to the dispersing side after only one collision with the focusing side. We will call that ``alternating collisions.''

For now let's not include gravity.
\section{Easy horns}
\begin{remark}
Move up ``finitary'' using unfolding (Remark \ref{REMreflectfinitary}).
\end{remark}
\section{Sufficient conditions for collisions to alternate sides}
Collisions need not alternate sides in horns, so maybe one should look for sufficient conditions under which they do, at least deeply enough in the horn. This is true for some horns defined by a single power function.\COMMENTINLINE{This is superseded now by a later section without curvature gap requirement; I am keeping this section for the comments and possibly usable prose.}

We give some samples of how to show (\emph{modulo an ``angle$\,\approx\,$slope'' approximation!}) that parabolic boundaries with ``enough margin'' (\(c\le3/4\) below) have this property.
\begin{proposition}\label{PRPy2cy2}
If \(g(y)=y^2\) and \(f(y)=cy^2\) with \(0<c\le3/4\), then collisions (deep) in the horn alternate sides.
\end{proposition}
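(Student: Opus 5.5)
We work in the vertical orientation of these notes: the dispersing side is $\{x=g(y)=y^2\}$, the focusing side is $\{x=f(y)=cy^2\}$, and $y>0$ is small. As announced, we use the slope$\,\approx\,$angle approximation throughout, treating directions by their slopes $dx/dy$. The claim to prove is: a trajectory segment that leaves the dispersing side and hits the focusing side at height $y_1$ must reflect into a segment that reaches the dispersing side again before meeting the focusing side a second time. Since both sides are graphs of even quadratics, the problem is scale invariant under $(x,y)\mapsto(\lambda^2x,\lambda y)$. So we normalize $y_1=1$, compute exactly, and the conclusion then holds at all depths. This is what ``deep in the horn'' amounts to modulo the approximation.

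\textbf{Incoming and outgoing segments.} In the approximation, reflection off a curve whose tangent has slope $m$ sends slope $\sigma$ to $2m-\sigma$. The focusing side at height $1$ has tangent slope $f'(1)=2c$. The incoming segment joins a point $(y_0^2,y_0)$ on the dispersing side to $(c,1)$, so its slope is
\[\sigma=\frac{c-y_0^2}{1-y_0}.\]
The outgoing slope is therefore $\tau=4c-\sigma$, and the outgoing ray is $x=c+\tau(y-1)$.

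\textbf{Range of the incoming slope.} The worst case should be $\sigma$ as small as possible. By convexity, the secant slope $\sigma$ is minimized when the segment is tangent to the dispersing side, i.e. when $\sigma=2y_0$ and $c-y_0^2=2y_0(1-y_0)$, which gives $y_0=1-\sqrt{1-c}$. The segment must also lie between the walls, and going up or down must be handled in both directions ($y_0<1$ and $y_0>1$). Minimal $\sigma$ yields maximal $\tau$, which steers the reflected ray back toward the focusing side, so this is the extremal case.

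\textbf{Which wall the outgoing ray meets.} Along the outgoing ray consider
\[h(y)=c+\tau(y-1)-cy^2.\]
A second focusing collision without an intervening dispersing one would require $h$ to vanish again before the ray hits the dispersing side, i.e. before $c+\tau(y-1)=y^2$. The quadratic $h$ has roots $1$ and $\tau/c-1$. The quadratic $y^2-\tau(y-1)-c$ has a real root iff its discriminant $\tau^2-4\tau+4c$ is nonnegative, and one then checks whether that root comes first. So everything reduces to an explicit inequality between $c$ and $\tau$. Substituting the extremal $\sigma=2(1-\sqrt{1-c})$ gives
\[\tau=4c-2+2\sqrt{1-c}.\]
The condition should then simplify to a polynomial inequality in $\sqrt{1-c}$ that holds exactly when $c\le 3/4$, i.e. $\sqrt{1-c}\ge 1/2$. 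I expect $c=3/4$ to be exactly the threshold at which the tangent-from-dispersing ray reflects into a ray tangent to the dispersing side.

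\textbf{Main obstacle.} The difficult step is the case bookkeeping. We must make sure the extremal configuration is really the tangent one, covering both upward and downward motion and segments with either sign of slope. We must also confirm that no non-tangent incoming segment produces a ray whose next collision is again with the focusing side. I would handle this by showing that the ``first wall hit'' condition is monotone in $\sigma$, so that only the endpoint of the $\sigma$-range needs checking.
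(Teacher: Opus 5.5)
There is a genuine gap, and it sits in the step you leave as ``should simplify.'' First, the statement holds only for motion \emph{into} the horn. In the paper's proof the particle leaves $g$ at height $y$ and meets $f$ at the lower height $yz$, with $z<1$. Your extremal point $y_0=1-\sqrt{1-c}<1$ is the \emph{outward} tangent segment. On that branch, $y_0\in[1-\sqrt{1-c},1)$, the secant slope $\sigma(y_0)=(c-y_0^2)/(1-y_0)$ satisfies $\sigma'(y_0)=\bigl((y_0-1)^2-(1-c)\bigr)/(1-y_0)^2\le0$. So tangency gives the \emph{maximum} of $\sigma$, not the minimum. Your remark that a larger $\tau$ steers toward $f$ is also only true for downward motion.

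Worse, for upward motion the conclusion is false. The ray $x=c+\tau(y-1)$, $y>1$, can meet $x=y^2$ only if the vertex $\tau/2$ of $y^2-\tau(y-1)-c$ exceeds $1$ and its discriminant is nonnegative, i.e.\ only if $\tau\ge2+2\sqrt{1-c}$. Your $\tau=4c-2+2\sqrt{1-c}$ satisfies this only for $c\ge1$. So for every $c<1$ the reflected ray hits $f$ again, at height $\tau/c-1>1$. This is the genuine non-alternation on the way out, matching the paper's picture of a final run of focusing collisions before exit. Handling both directions therefore cannot succeed; you must restrict to inward motion.

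Second, restrict to inward motion, so $y_0\in(1,1+\sqrt{1-c}]$, where $\sigma$ is again decreasing. Then the minimal incoming slope occurs at the \emph{other} root, $y_0=1+\sqrt{1-c}$, and equals $\sigma_{\min}=2+2\sqrt{1-c}$. Hence $\tau\le4c-2-2\sqrt{1-c}$. A downward ray from $(c,1)$ reaches $g$ before $f$ iff $\tau\le2-2\sqrt{1-c}$, the slope of the lower tangent from $(c,1)$ to $g$. The inequality $4c-2-2\sqrt{1-c}\le2-2\sqrt{1-c}$ holds for \emph{every} $c<1$.

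So your prediction that $c=3/4$ is a tangent-to-tangent threshold is wrong. Carried out, your computation gives failure for all $c$ on your branch and success for all $c<1$ on the correct branch, never ``exactly $c\le3/4$.'' The $3/4$ in the paper is an artifact of its cruder, decoupled route. It writes the incoming slope as $2sy$ with $s\ge1$ and the collision height as $yz$ with $z<1$, and uses these bounds separately. After your normalization, this amounts to the bound $\sigma>2=g'(1)$. No segment from $g$ through $(c,1)$ actually has that slope, since $x=2y-2+c$ lies strictly left of the tangent line $x=2y-1$. The crude bound gives $\tau<4c-2$, and $4c-2\le2-2\sqrt{1-c}$ iff $c\le3/4$.

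With the direction fixed and the correct root, your approach becomes a valid proof within the slope$\,\approx\,$angle model, and in fact a stronger one. As written, it does not prove the proposition.
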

\begin{proof}
The particle leaves a \(g\)-collision at \(y\) with slope \(sg'(y)=2sy\)
for some \(s\ge1\). It hits \(f\) at a point \(yz\) with \(z<1\).
Using ``angle$\approx$slope\rlap,'' the slope after this \(f\)-collision is approximately 
\begin{equation}\label{eqSlopeBound}
2f'(yz)-2sy=4cyz-2sy<2kyz\text{ if }2c-k\le1,
\end{equation}
because
\((4c-2k)yz-2sy<2y[2c-k-s]\le0\). Here, \(k\) is a parameter we can choose to suit.\COMMENTINLINE{\Tiny It seems that the slope$\approx$angle approximation can be made precise by including a \(Cy^3\) error term---and it seems that this can be absorbed into the right-hand side because we stipulate strict inequality anyway. If not, then \(c<3/4\) should be a another way to absorb the error. Assuming \(C^3\) boundary should allow us to likewise absorb the difference between a parabola and a cusp of the same curvature---again with a cubic correction. Or a quadratic one because \(f'\)?}

Choosing \(k=0\) and (hence) \(c\le1/2\), this implies that the exiting slope is
negative, so the next collision is with \(g\).

This can be improved to \(c\le2/3\) because it suffices for the ``exit''
slope to be less than \(f(yz)/yz=cyz\), which corresponds to hitting
\(g\) at the origin and to \eqref{eqSlopeBound} with \(2k=c\); this applies when \(1\ge2c-\frac c2=\frac32c\).

A sharp bound is derived from a tangency with \(g\), which
corresponds to slope \(2yz(1-\sqrt{1-c})\): \eqref{eqSlopeBound} with
\(k=1-\sqrt{1-c}\) applies when \(2c-(1-\sqrt{1-c})\le 1\), i.e., when
\(\sqrt{1-c}\le2(1-c)\), or \(c\le3/4\).
\end{proof}
\begin{remark}
While it is good to have an example that may be tractable, the proof depends on explicit formulas. These do generalize to a like pair of power functions.
\end{remark}
\begin{proposition}\label{PRPyrcyr}
If \(r>1\) and \(g(y)=y^r\) and \(f(y)=cy^r\) with \(0<c\le1/2\), then collisions (deep) in the horn alternate sides.
\end{proposition}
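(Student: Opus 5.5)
The plan is to mimic the proof of \cref{PRPy2cy2}, working in the same coordinates and with the same ``angle$\,\approx\,$slope'' approximation. The boundaries are written as functions of $y$: $g(y)=y^r$ is the outer (dispersing) side and $f(y)=cy^r$ the inner side. A particle leaving a $g$-collision at height $y$ does so with slope $s\,g'(y)=s\,r y^{r-1}$ for some $s\ge1$. Indeed, to move inward and still reach $f$, the outgoing segment must lie on the far side of the tangent line to $g$ at that point. It then meets $f$ at some point $yz$ with $0<z<1$, because the particle moves toward the tip of the horn.

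\emph{Step 1.} Reflect at $f$. Under the slope-as-angle approximation, the reflected slope is twice the boundary slope minus the incoming slope:
\[
2f'(yz)-s\,g'(y)=2cr(yz)^{r-1}-sry^{r-1}.
\]

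\emph{Step 2.} Show that this is negative, so that the next collision is with $g$ and not with $f$ again. Since $z<1$ and $r>1$, we have $(yz)^{r-1}<y^{r-1}$. Hence the reflected slope is less than $ry^{r-1}(2c-s)$. This is $\le0$ as soon as $2c\le1\le s$, which is exactly the hypothesis $c\le1/2$.

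One could try to sharpen this, as in \cref{PRPy2cy2}, by asking only that the exit slope stay below the secant slope $f(yz)/(yz)$, or below the slope of the tangent to $g$. That would raise the allowed value of $c$, but it is not needed for the stated bound $1/2$, so the plan stays with the simplest, $k=0$, version.

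\emph{Step 3.} Check that the conclusion ``the next collision is with $g$'' really follows. A negative slope, in these coordinates, means the particle moves away from $f$ and toward $g$. The segment therefore cannot hit $f$ again before meeting $g$, because $f$ is convex toward the interior on the relevant side. Deep in the horn the geometry is the same at every scale, since the power functions are self-similar under $y\mapsto\lambda y$. So the argument applies uniformly at every collision pair.

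The main obstacle is making the slope$\,\approx\,$angle approximation rigorous. The reflection law acts on angles: the exact reflected slope is $\tan(2\arctan f'-\arctan(\text{incoming slope}))$. The error this introduces is of cubic order in the slopes, that is $O(y^{3(r-1)})$. I would absorb it by keeping a strict margin in Step 2. The inequality $(yz)^{r-1}<y^{r-1}$ gives no uniform margin when $z$ is near $1$. In that case, however, one can use $s>1$ strictly, which holds because the trajectory leaves $g$ transversally, or else treat the boundary case $c=1/2$ separately as the limiting configuration. If a clean absorption fails, I would state the result first for $c<1/2$, and recover $c=1/2$ by noting that equality forces a degenerate, grazing configuration.
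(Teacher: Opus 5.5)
Your argument is the paper's own proof. It uses the same parametrization (outgoing slope $s\,g'(y)$ with $s\ge1$, hit point $yz$ with $z<1$), the same slope-as-angle reflection $2f'(yz)-s\,g'(y)$, and the same $k=0$ estimate $ry^{r-1}\bigl[2cz^{r-1}-s\bigr]<ry^{r-1}[2c-s]\le0$, and like the paper it holds only modulo that approximation. In your rigor paragraph, the cubic error bound is not uniform because $s$ is unbounded, but the exact criterion $s\,g'(y)>2f'(yz)/(1-f'(yz)^2)$ together with the exact relation $1-z=(1-cz^r)/(rs)$ keeps $z$ away from $1$ for bounded $s$; this gives a uniform margin deep in the horn, even for $c=1/2$, so no separate degenerate case is needed.
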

\COMMENTINLINE{Here, \(r<2\) is interesting because we need \(f''(0)>0\).}
\begin{proof}
The particle leaves a \(g\)-collision at \(y\) with slope \(sg'(y)=rsy^{r-1}\)
for some \(s\ge1\). It hits \(f\) at a point \(yz\) with \(z<1\).
Using ``angle$\approx$slope\rlap,'' the slope after this \(f\)-collision is approximately 
\begin{equation}\label{eqSlopeBoundr}
2f'(yz)-rsy^{r-1}=2cr(yz)^{r-1}-rsy^{r-1}<rk(yz)^{r-1}\text{ if }2c-k\le1,
\end{equation}
because
\[2cr(yz)^{r-1}-rsy^{r-1}-rk(yz)^{r-1}
=
ry^{r-1}\big[(2c-k)z^{r-1}-s\big]
<
ry^{r-1}\big[2c-k-s]
\le0.
\]

With \(k=0\) and (hence) \(c\le1/2\) this implies that the exiting slope is
negative, so the next collision is with \(g\).
\end{proof}
\begin{remark}
This can be slightly improved to \(c\le\frac{r}{2r-1}\) because it suffices for the ``exit'' slope to be less than \(f(yz)/yz=c(yz)^{r-1}\), which corresponds to hitting \(g\) at the origin; \eqref{eqSlopeBoundr} with \(rk=c\) applies when \(1\ge2c-\frac cr=c\frac{2r-1}r\). Note that \(\frac{r}{2r-1}=2/3\) when \(r=2\) and that \(\frac{r}{2r-1}\) is near \(1/2\) for large \(r\).
\end{remark}
The constraints on \(c\) may not be needed in the preceding. They appear because \(z\) and \(s\) were considered independently. However, doing otherwise does not seem to fundamentally alter this outcome while requiring substantial work.
\begin{remark}
It would be new mathematics to have categories of horns that could be allowed in \cite{ChernovMarkarianBook}, say. But it would be better yet if horns (possibly under mild(!) additional assumptions) could be allowed altogether.
\end{remark}
\begin{remark}
The preceding uses ``angle$\approx$slope\rlap,'' and it would be very good
to check that this is OK, i.e., by verifying that deep enough in the horn
this is close enough to retain the desired conclusion. (It does not hurt that this is a third-order approximation.) This might further
allow the arguments to be extended to cases where the inner and outer
curvatures at the tip of the horn are sufficiently different.
\end{remark}
\section{A circular counterpoint}
\begin{proposition}\label{PRPCircleAlternate}
Consider two circles, one inside the other, and with a common tangent. In this horn, collisions alternate as follows. A trajectory from the inner circle towards the horn will impact the outer circle, and the subsequent collision will again be with the inner circle.\COMMENTINLINE{{\color{red}This needs a little work.}\newline Define ``into the cusp'': before the collision with the outer circle, the trajectory is on the other side of the normal at that collision point from the cusp.\newline We might generally want to assume deep in the cusp that where a normal to the outer curve intersects the inner curve, the normal to the inner curve is on the far side from the cusp.\newline No ``angle$\approx$slope'' approximation needed here! This might serve as a warm-up; the next section seems more resilient with respect to going to zero-curvature cusps. The preceding section seems completely superseded.}
\end{proposition}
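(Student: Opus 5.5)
Two circles tangent at $O$: the inner (dispersing side, radius $R_-$, center $C_-$) and the outer (focusing side, radius $R_+>R_-$, center $C_+$). The centers lie on the common normal line through $O$. The plan is to reduce everything to elementary facts about chords, using reflection symmetry across lines through the centers and no slope approximations.

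\emph{Step 1 (the outer circle).} A chord of a circle makes equal angles with the circle at both endpoints. Reflection in the outer circle at $P$ maps the incoming segment to its mirror image in the line $C_+P$. Let $Q_0$ be the previous collision point on the inner circle and $A$ the point where the incoming line would meet the outer circle again before $P$. The outgoing chord from $P$ then ends at $A'$, the reflection of $A$ in $C_+P$. I will make ``into the cusp'' precise as in the comment: before hitting the outer circle, the trajectory lies on the far side of the normal $C_+P$ from $O$. Then $A$ is on the far side of $C_+P$ from the cusp, so $A'$ is on the cusp side of $C_+P$, with arc distance from $P$ equal to that of $A$.

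\emph{Step 2 (the outgoing ray meets the inner circle again).} The outgoing chord $PA'$ must cross the inner disk rather than run along the gap to the cusp. By Step 1 the outgoing chord is congruent to the incoming chord $AP$, rotated about $C_+$. Its distance from $C_+$ equals that of the incoming line, which met the inner circle, so this distance is at most $|C_+C_-|+R_-=R_+-R_-+R_- $. That bound alone does not suffice, so I will use the direction. Consider the rotation about $C_+$ that takes $A$ to $P$ and $P$ to $A'$. It turns the incoming line toward the cusp, and I need the rotated line to still meet the inner disk. This is the main obstacle. I will first try comparing the tangent-line picture. Among lines from $P$ heading toward the cusp side, those meeting the inner disk form an angular interval bounded by the two tangents from $P$ to the inner circle. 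I will show the reflected direction lies strictly between the tangent toward $O$ and the tangent away from $O$, using the ``into the cusp'' hypothesis to exclude the tangent through the gap near $O$.

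A cleaner route is to note that reflection in $C_+P$ preserves the outer circle but maps the inner circle to a circle of radius $R_-$ centered at $C_-'$, the reflection of $C_-$. The incoming line met the inner circle, so the outgoing line meets the reflected inner circle. It then remains to check that, on the cusp side of $C_+P$, the reflected inner disk near the relevant arc is contained in the actual inner disk. I will check this via the position of $C_-'$ relative to the actual inner circle, together with the fact that the segment beyond $P$ stays between the reflected circle and the outer one. If that containment fails for deep collision points, I will fall back on the additional hypothesis from the comment.

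\emph{Step 3.} The collision after the outer one is therefore with the inner circle. Then the trajectory again heads toward the outer circle, since a segment leaving a convex obstacle cannot hit it again. Combined with Step 1 this gives the alternation inner, outer, inner.
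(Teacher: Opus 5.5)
Your Step~2 is where the whole content of the proposition lies, and you do not carry it out. The tangent-line route stops at ``I will show'', and for the reflection route you say it ``remains to check'' a containment, with a fallback hypothesis in case it fails. As written this is a reduction, not a proof. The reduction also has a slip. Let $\sigma$ be reflection in $C_+P$ and $D$ the inner disk. Knowing that the outgoing \emph{line} meets $\sigma(D)$ says nothing about the outgoing \emph{ray} from $P$. What has to be reflected is the specific point $Q_0$ where the incoming segment left $D$, and that is exactly where the ``into the cusp'' hypothesis enters.

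The reflection route closes in a few lines and needs no extra hypothesis.
\begin{itemize}
\item Since $C_-$ lies on the segment $C_+O$ and $C_+$ lies on the line $C_+P$, the center $C_-$ is strictly on the cusp side of that line. (If $P$ is antipodal to $O$, then $\sigma(D)=D$ and $\sigma(Q_0)\in\partial D$ lies on the outgoing chord directly.)
\item Take coordinates with $C_+P$ as the first axis and $C_-=(c_1,c_2)$, $c_2>0$. For $x=(x_1,x_2)$ with $x_2<0$ one has $|\sigma(x)-C_-|^2-|x-C_-|^2=4x_2c_2<0$. So $\sigma$ maps the part of $D$ on the far side strictly into $D$. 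This is the containment you wanted, and it holds globally, not just near some arc; your remark about the segment staying between the reflected circle and the outer one is unnecessary.
\item By your convention $Q_0\in\partial D$ lies strictly on the far side, so $\sigma(Q_0)$ is an interior point of $D$.
\item $\sigma(Q_0)$ lies on the outgoing chord at distance $|PQ_0|<|PA|=|PA'|$ from $P$. The segment from $P$ to $\sigma(Q_0)$ is the mirror image of $Q_0P$, so it stays inside the outer disk and touches the outer circle only at $P$.
\item Since $P\notin D$, this segment crosses $\partial D$ transversally before reaching $\sigma(Q_0)$. Hence the next collision is with the inner circle.
\end{itemize}

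For comparison, the paper's sketch argues via the caustic instead. The incoming and outgoing chords are tangent to a common circle concentric with the outer one, and the paper asserts without argument that if only one of them meets the inner circle, it is the one nearer the horn. The inequality above is precisely a proof of that assertion. It also shows why the direction of motion matters: reversing it can make the conclusion fail.
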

\begin{proof}[Proof of Proposition~\ref{PRPCircleAlternate}]
The segments before and after any outer collision are either normal to the outer circle (in which case the conclusion holds) or tangent to a common circle concentric with the larger one. If neither or both intersect the inner circle, we are done. If only one does, then it must be the one nearer to the horn. 
\begin{figure}[h]\begin{tikzpicture}[scale=.35]
\coordinate (O) at (0,0);

\filldraw[black!10!white] (-5.2,-5.2) rectangle (5.2,5.2);
\filldraw[white] (O) circle (5);
\draw (O) circle (5);
\filldraw [black!10!white] (0,-1.5) circle (3.5);
\draw (0,-1.5) circle (3.5);

\draw[dashed] (O) -- (4,3);

\draw[color=red!60, fill=red!5, very thick,->](-4,3) -- (4,3);
\draw[color=red!60, fill=red!5,dashed](4,3) -- (1.755,-4.682);
\draw[color=red!60, fill=red!5, very thick,->](4,3) -- (3.138,0.05);
\draw[color=red!60, dashed] (O) circle (3);

\draw[color=blue!60, fill=blue!5, very thick,->](0,5) -- (4,3);
\draw[color=blue!60, fill=blue!5,dashed](4,3) -- (4.8,-1.4);
\draw[color=blue!60, fill=blue!5, very thick,->](4,3) -- (4.8,-1.4);
\draw[color=blue!60, dashed] (O) circle (4.47214);

\draw[color=green!80!black, fill=green!5, dashed](-5,0) -- (4,3);
\draw[color=green!60!black, fill=green!5,very thick,->](.754,1.918) -- (4,3);
\draw[color=green!60!black, fill=green!5, very thick,->](4,3) -- (2.547,.901);
\draw[color=green!80!black, fill=green!5,dashed](4,3) -- (-1.4,-4.8);
\draw[color=green!80!black, dashed] (O) circle (1.58114);

\filldraw [black] (O) circle (2pt);
\draw (O) circle (5);
\draw (0,-1.5) circle (3.5);
\end{tikzpicture}
\caption{Crescent billiard}\end{figure} 
\end{proof}
\section{Parabolas refined}\label{SECParabolasRefined}
The previous approach to parabolas is too crude. Here is a more careful one.\COMMENTINLINE{Switched notation to \(x\) along horn axis.}
\begin{proposition}
Consider the cusp between the curves \(y=x^r\) and \(y=cx^r\) with \(r,c>1\), so the latter is the dispersing curve. Collisions alternate deep in the cusp when \(c\ge2\), and for any \(c>1\), provided \(r\) is close enough to 2.
\end{proposition}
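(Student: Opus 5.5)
The plan is to exploit the scaling symmetry of the pair $y=x^r$, $y=cx^r$. The maps $(x,y)\mapsto(\lambda x,\lambda^r y)$ preserve both curves. They do not preserve the reflection law exactly, but they do preserve it up to the ``angle$\approx$slope'' error. Deep in the cusp all relevant slopes are $O(x^{r-1})$, so angles and slopes differ by $O(x^{3(r-1)})$. In slope units normalized by $x^{r-1}$, reflection off the lower (focusing) curve $y=x^r$ at abscissa $t$ then acts as $s_{\rm out}=2rt^{r-1}-s_{\rm in}$ up to an error that vanishes as $x\to0$. First I would reduce the claim to a strict inequality for the exact scale-invariant model. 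Then I would show the error is absorbed because the inequality is strict, uniformly on a compact parameter range.

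Consider a trajectory leaving the dispersing side at $(x,cx^r)$ and next hitting the focusing side at $(xz,(xz)^r)$, with $0<z<1$ for motion toward the cusp.
\begin{enumerate}
\item The incoming chord has exact slope $x^{r-1}(c-z^r)/(1-z)$.
\item For the chord to run into the table, its slope must be at least the tangent slope $rcx^{r-1}$ of the upper curve. This gives the feasibility constraint
\[
c-z^r\ \ge\ rc(1-z),
\]
which keeps $z$ away from $0$; for $r=2$ it reads $z\ge c-\sqrt{c^2-c}$.
\item The outgoing slope is approximately
\[
x^{r-1}\Bigl(2rz^{r-1}-\frac{c-z^r}{1-z}\Bigr).
\]
\item If this is less than the slope $(xz)^{r-1}$ of the secant from the collision point to the origin, then the outgoing line is above both curves at $x=0$. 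Hence it must cross the dispersing side before it can return to the focusing side.
\end{enumerate}
This yields the sufficient condition
\[
h(z)\dfn c-z^r-(2r-1)z^{r-1}(1-z)>0
\]
on the feasible range of $z$. For $r=2$ this is $h(z)=c-3z+2z^2$, whose minimum is $c-\frac98$. So $c\ge2$ (indeed $c>\frac98$) suffices there. Continuity in $r$ and the explicit form of $h$ should handle $c\ge2$ for general $r>1$. I expect this to reduce to checking that $z^r+(2r-1)z^{r-1}(1-z)\le2$ on $[0,1]$.

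For arbitrary $c>1$ with $r$ near $2$, the secant-to-origin criterion is too crude. Following the sharpening in the proof of \cref{PRPy2cy2}, I would replace it by the exact threshold: the slope of the line from $(xz,(xz)^r)$ tangent to $y=cx^r$. Any outgoing slope below this threshold meets the dispersing curve. At $r=2$ this is an explicit algebraic condition in $z$ and $c$. It should then be combined with the feasibility constraint $z\ge c-\sqrt{c^2-c}$. The constraint matters here: for $c$ near $1$ it forces $z$ near $1$, which is exactly where the crude inequality failed. I would verify the resulting inequality at $r=2$ for all $c>1$ on the feasible set, then perturb to $r$ near $2$ by continuity. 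Rightward motion ($z>1$) and outgoing segments heading away from the cusp need the analogous but easier check; there the outgoing line rises above the focusing curve's tangent and moves into the wider part of the cusp.

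The main obstacle is uniformity near the grazing endpoints. As $z\to1$, or as the incoming chord becomes tangent to the dispersing side, the strict inequality degenerates. The $O(x^{3(r-1)})$ angle–slope error must then be compared with a margin that may itself go to zero. I would first try to show that the feasibility constraint bounds $z$ away from $1$ by an amount depending only on $c$ and $r$. If that fails, I would handle a neighborhood of $z=1$ separately: a nearly grazing collision with the focusing side leaves the outgoing slope close to the tangent slope $rx^{r-1}$, and such a line exits upward into the dispersing side directly.
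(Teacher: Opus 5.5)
Your approach is correct for inward motion and more thorough than the paper's.

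\textbf{Comparison with the paper.} The paper checks a single extreme configuration. A ray leaves $(x_0,cx_0^r)$ tangentially and hits $y=x^r$ at $ax_0$, where $a<1$ is the small root of $z^r-crz+c(r-1)$. Its reflected slope $2T-S_0$ is compared with the slope $S_1$ of the tangent from the impact point back to $y=cx^r$. This gives the ratio $\frac c2(a^{1-r}+b^{1-r})$, which exceeds $1$ for $c\ge2$ and equals $c$ at $r=2$; continuity in $r$ does the rest. The paper tacitly assumes that tangential departure is the worst case.

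You instead require a strict inequality at every admissible impact point $z\in[a,1)$. At $z=a$ your sharp test is exactly the paper's ratio condition. So your argument contains the paper's and also removes its unproved reduction to the edge case. The paper's version is shorter only because it evaluates one configuration.

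\textbf{Your two checks go through.} For $c\ge2$, the secant-to-origin test needs no roots, and the inequality you expected holds. On $[0,1]$, $z^r+(2r-1)z^{r-1}(1-z)$ has maximum $2(1-\frac1{2r})^r<2$, attained at $z=1-\frac1{2r}$. So $c>2(1-\frac1{2r})^r$ already suffices.

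For $c$ near $1$ at $r=2$, the threshold slope is $2azx$ (since $ab=c$). Your condition becomes $k(z)=c-(4-2a)z+(3-2a)z^2>0$. Using $a^2=2ca-c$, its minimum on $[a,1]$ is $4(c-1)(1-a)/(3-2a)>0$. After multiplying by $1-z$ (with $k(1)=c-1$), the continuity argument in $r$ runs on the compact interval $[a_r,1]$.

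\textbf{Two corrections.} First, your ``main obstacle'' is misdiagnosed: the margins do not degenerate at either end.
\begin{enumerate}
\item At $z=a$ the margins are the positive numbers above.
\item Feasibility does not keep $z$ away from $1$; at $z=1$ it reads $c\ge1$.
\item However, $z\to1$ is a nearly perpendicular chord, not a grazing one, and there your normalized margin tends to $+\infty$.
\end{enumerate}
What actually fails near $z=1$ is the claim that all slopes are $O(x^{r-1})$. Split the cases instead. If $\arctan S_{\mathrm{in}}>2\arctan T$, the exact reflected ray rises as it moves left, and your secant argument applies verbatim. Otherwise all slopes are $O(x^{r-1})$, and the $O(x^{3(r-1)})$ angle--slope error is absorbed by a margin of order $x^{r-1}$.

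Second, drop the remark about rightward motion, because it is false. At $r=2$, a ray leaving $(x,cx^2)$ tangentially to the right hits $y=x^2$ at $bx$, with $b=c+\sqrt{c^2-c}$. It reflects with slope approximately $(4b-2c)x$. This lies $4b(c-1)x$ below the slope $2b^2x$ of the tangent from $(bx,b^2x^2)$ to $y=cx^2$ farther right. So the next collision is again with the focusing side. The proposition and the paper's proof concern inward motion only. This is consistent with the paper's description of exits ending in successive focusing collisions.
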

\begin{proof}
We consider the edge case as follows. A trajectory starts at \((x_0,cx_0^r)\) in the tangential direction and collides with \(\{y=x^r\}\) at the point where \(x=p=ax_0\) with \(a<1\). To find \(x\) via \(a\), insert this into the equation of the tangent line
\[
\underbracket{y}_{\mathclap{=p^r=a^rx_0^r}}-cx_0^r=\overbracket{S_0}^{\mathclap{=\text{slope}=crx_0^{r-1}}}(\underbracket{x}_{\mathclap{=p=ax_0}}-x_0)
\]
to get \(a^r=c[r(a-1)+1]\) (note that \(r(a-1)+1\) gives the tangent line of \(a^r\) through \((1,1)\)) or
\[
a^r-cra+c(r-1)=0.
\]
Thus, \(a\) is independent of \(x_0\). There are two possible values:
\begin{lemma}
If \(c>1\), then \(z^r-crz+c(r-1)\) has exactly two positive roots, one on either side of 1.
\end{lemma}
\begin{proof}
\(h(z)\dfn z^r-crz+c(r-1)=(z^{r-1}-cr)z+c(r-1)=c(r-1)>0\) when \(z=0\) or \(z=\sqrt[r-1]{cr}>1\), but \(h(1)=1^r-cr\cdot1+c(r-1)=1-c<0\).
\end{proof}
Here we are interested in the smaller of these.\COMMENTINLINE{A tangent-line approximation of \(f\) shows that \(a>1-\frac1r\).}
\begin{remark}
We also note that for \(r=2\) (parabolas) the solutions of \(z^2-2cz+c=0\) are \(c\pm\sqrt{c(c-1)}\), which move away from 1 as \(c\) increases: \(\frac{d}{dc}c\pm\sqrt{c(c-1)}=1\pm\frac{2c-1}{2\sqrt{c(c-1)}}=1\pm\underbracket{\textstyle\sqrt{\frac{4c^2-4c+1}{4c^2-4c}}}_{>1}\).
\end{remark}
Suppose the trajectory rebounds to again hit \(y=cx^r\) tangentially at the point where \(x=x_1<p\), and insert \(x=p=bx_1=ax_0\) with \(b>1\) into the equation of the tangent line
\[
\underbracket{y}_{\mathclap{=p^r=b^rx_1^r}}-cx_1^r=\overbracket{S_1}^{\mathclap{=\text{slope}=crx_1^{r-1}}}(\underbracket{x}_{\mathclap{=bx_1}}-x_1)
\]
to get \(b^r-crb+c(r-1)=0\); the same equation as before, but now with a view to \(b>1\).\COMMENTINLINE{The geometry suggests that \(a+b>2\).} 

Within the ``angle$\approx$slope'' approximation, the average of the slopes \(S_0,S_1\) should match that of the tangent line at \(p\), which is \(T=rp^{r-1}=ra^{r-1}x_0^{r-1}=rb^{r-1}x_1^{r-1}\):
\[
\frac{S_0+S_1}{2T}=\frac c2\frac{x_0^{r-1}+x_1^{r-1}}{p^{r-1}}=\frac c2\big(\frac1{a^{r-1}}+\frac1{b^{r-1}}\big)=\frac c2\frac{b^{r-1}+a^{r-1}}{a^{r-1}b^{r-1}}=\frac c2\frac{ab^r+ba^r}{a^rb^r}.
\]
Note that this ratio is independent of \(x_0\) but should tend to 1 as \(x_0\to0\) given the assumed geometry. (And in this limit, the ``angle$\approx$slope'' becomes precise.)

Accordingly, the desired conclusion holds when the ratio instead exceeds 1: the actual continuation past \(p\) has \emph{smaller} slope, so the actual rebound hits the dispersing side transversely, which establishes alternating collisions.

The ratio exceeds 1 when \(c\ge2\), because \(a<1\): \(\frac c2\big(\frac1{a^{r-1}}+\frac1{b^{r-1}}\big)>1\cdot\big(1+\frac1{b^{r-1}}>1\big)\).

Further, the ratio is continuous in \(c\) and \(r\), so it exceeds 1 for any \(c\), provided \(r\) is close enough to 2:

For \(r=2\) (parabolas) and hence \(a,b=c\pm\sqrt{c(c-1)}\), this becomes
\[
\frac{S_0+S_1}{2T}=\frac c2\frac{b+a}{ab}=\frac c2\frac{2c}{c^2-c(c-1)}=c>1.\qedhere
\]
\end{proof}
\begin{remark}
The central technical question is whether one can show that \(\frac1{a^{r-1}}+\frac1{b^{r-1}}>\frac2c\) whenever \(r,c>1\) and \(a,b\) are the positive roots of \(z^r-crz+c(r-1)\). As noted, this is clear for \(c\ge2\) (and any \(r\)) and for other \(c\) when \(r\) is close enough to 2.
\end{remark}

\section{Successive tangencies}
We talked about whether one can rule out the possibility of indefinitely
repeated tangencies---as a way to maybe rule out impact angles not getting
steeper. For parabolas and circles, this seems settled by the preceding.

Here is a general exploration of this scenario. Suppose a dispersing
boundary \(y=g(x)\) is opposite a focusing boundary \(0<y=f(x)<g(x)\)\COMMENTINLINE{Maybe switch to functions of \(x\) in all the preceding as well.} from which
an orbit can indefinitely be tangent to \(g\). Specifically, suppose
\((x,y)=(x,f(x))\) is a common point of two tangents to \(g\) at
\(x_1<x_2\). Then
\[
f(x)=g(x_i)+g'(x_i)(x-x_i)
\]
for \(i=1,2\). On one hand, this can be differentiated with respect to
\(x\):
\begin{multline*}
f'(x)=g'(x_i)\frac{dx_i}{dx}+g''(x_i)\frac{dx_i}{dx}(x-x_i)+g'(x_i)(1-\frac{dx_i}{dx})\\=g'(x_i)+g''(x_i)\frac{dx_i}{dx}(x-x_i).
\end{multline*}
At the same time, these tangent lines to \(g\) are supposed to be
reflections in the tangent line to \(f\) and \(x\), so
\[
f'(x)=\tan\Big(\frac12\big(\tan^{-1}g'(x_1)+\tan^{-1}g'(x_2)\big)\Big)=\frac{g'(x_1)+g'(x_2)}2+O((\max g'(x_i))^3).
\]
Comparing these (with the former averaged over \(i=1,2)\)) implies
\begin{multline*}
g''(x_1)\frac{dx_1}{dx}(x-x_1)-g''(x_2)\frac{dx_2}{dx}(x_2-x)\\=2\tan\Big(\frac12\big(\tan^{-1}g'(x_1)+\tan^{-1}g'(x_2)\big)\Big)-\big(g'(x_1)+g'(x_2)\big)=O(x^3).
\end{multline*}
Note that on the left-hand side all terms are positive.
It is not clear whether anything about this is problematic. Maybe one should instead look for an ODE solved by \(f\)? It looks unhelpful that the \(\frac{dx_i}{dx}\) are unknown other than by solving for them in the equations where they first arise.
\section{Reflections about hyperbolicity}\label{SECHyp}
If one ``unfolds'' the horn by reflecting the dispersing side in the focusing side, then it seems that if the reflected curve is dispersing, then we have created an ``equivalent cusp'' and thus established hyperbolicity.

Here are thoughts on sufficient conditions for dispersion of the reflected curve. Large curvature of the dispersing side should help, and the right model calculation seems to be with the unit circle as the focusing side.
\begin{theorem}
A horn unfolds to a dispersing cusp if the inner wall has more than twice the curvature of the outer wall.
\end{theorem}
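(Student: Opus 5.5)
The plan is to make ``unfolding'' concrete and then read off a sign from a second-order Taylor expansion at the tip $O$. Place $O$ at the origin with the common tangent as the $x$-axis, and write the two walls as graphs over it: the outer (focusing) wall as $y=f(x)=\tfrac{k_o}{2}x^2+O(x^3)$ and the inner (dispersing) wall as $y=g(x)=\tfrac{k_i}{2}x^2+O(x^3)$, with $0<k_o<k_i$. The unfolding I will use reflects the inner wall through the outer wall \emph{along the normals of the outer wall}. A point at normal distance $h$ on the table side of the outer wall is sent to the point at distance $h$ on the other side, along the same normal. This is the curved-mirror analogue of the straight-wall unfolding, and it is exactly the operation suggested by the model case in which the outer wall is the unit circle. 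The unfolded cusp is then the region between the original inner wall and its reflected image $\tilde g$.

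First I compute $\tilde g$ to second order. The normal distance from the outer wall to the inner wall at arc length $s$ is $h(s)=\tfrac12(k_i-k_o)s^2+O(s^3)$. This is the same ``width'' used in \eqref{eq:Deltat}, and the $C^3$ hypothesis of \cref{thm:main} controls the error terms. Offsetting the outer wall by $-h$ along its normal gives, to second order,
\[
\tilde g(x)=f(x)-h(x)+O(x^3)=\tfrac12\bigl(k_o-(k_i-k_o)\bigr)x^2+O(x^3)=\tfrac12(2k_o-k_i)x^2+O(x^3).
\]
So $\tilde g$ has curvature $2k_o-k_i$ at $O$. The correction from the curvature of the normal lines is of order $h\,k_o s$, hence $O(s^3)$, and does not affect this coefficient.

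Next I check that this gives a dispersing cusp. The unfolded region lies above $\tilde g$ and below $g$. The inner wall $g$ is already dispersing, since it bends away from the region. The reflected wall bounds the region from below, so it is dispersing exactly when it bends downward, i.e.\ when $2k_o-k_i<0$, which is the same as $k_i>2k_o$. That is precisely the hypothesis that the inner wall has more than twice the curvature of the outer one. Under this hypothesis the two walls of the unfolded cusp have curvatures $k_i$ and $k_i-2k_o$, both positive in the dispersing sense, and they share the tangent at $O$. This is a genuine dispersing cusp in a neighborhood of $O$, provided that strict inequality survives the $O(x^3)$ errors after shrinking the neighborhood, which it does.

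The hard part will be justifying that this unfolding is faithful to the dynamics. Reflection along normals of a curved mirror is not an isometry, so a segment reflected off the outer wall does not unfold to an exact straight continuation. I would handle this only to the order needed for hyperbolicity and cusp geometry. In the osculating-circle model the reflection is inversion-like and is an isometry to first order along the wall. Hence unfolded trajectories are straight up to curvature errors of order $k_o h$. Deep in the horn these are dominated by the $O(s^2)$ gap in curvature $2(k_i-2k_o)$, so cone-field and dispersing arguments for cusps carry over. If this step resists a clean treatment, I would state the theorem purely as the geometric fact about the reflected curve and defer the dynamical equivalence, in keeping with the section's heuristic character.
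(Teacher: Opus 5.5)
Your second-order computation is correct, and it establishes the geometric content of the statement, which is all the paper proves. You reach it by a different route. The paper treats only the model case where the outer wall is the unit circle, and it argues in reverse. It reflects along radial lines using the optical mirror equation $\frac1{d_1}+\frac1{d_2}=\frac2R$, not your equal-distance flip. It finds that the preimage of the tangent line at $\theta=0$ is the ellipse $r=1/(2-\cos\theta)$ and reads off curvature $2$ at its vertex from the polar curvature formula. It then asserts that more strongly curved inner walls reflect to dispersing curves, and rescales.

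You instead expand forward for arbitrary $C^3$ walls. The two reflection rules do differ: the mirror equation gives $d_2=-d_1-2d_1^2/R+\cdots$. But with $d_1=O(s^2)$ the discrepancy is $O(s^4)$, so both rules give reflected curvature $2k_o-k_i$ at $O$. In the paper's normalization this is $2\cdot 1-2=0$, the tangent line.

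What each route buys:
\begin{itemize}
\item Your version avoids the ``modulo scaling'' step and gives an explicit dispersing curvature $k_i-2k_o$ for the new wall, which makes the paper's monotonicity assertion immediate. It also shows the threshold does not depend on which reflection rule is adopted.
\item The paper's version gives an exact closed form along the whole arc $|\theta|<\pi/2$. Its reflection rule comes from the focusing of rays, which fits its hyperbolicity motivation.
\end{itemize}

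Your last paragraph is not needed, and its domination claim does not hold as stated. The normal flip is an isometry only to first order at the wall. It bends a straight segment leaving the wall obliquely into a curve whose curvature is of order $k_o$. For example, the tangent line at $O$ goes to a curve of curvature $2k_o$, which is exactly the paper's edge case. That distortion is of the same order as the margin $k_i-2k_o$ and does not shrink deep in the horn, so no scaling argument makes it negligible. Your fallback, stating the result purely as a fact about the reflected curve, is precisely what the paper does.
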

\begin{remark}\label{REMreflectfinitary}
These cases are obviously finitary, so even if we skip hyperbolicity, let's
include this as an easy proof of ``finitary'' when there is a curvature gap.
\end{remark}
\begin{proof}
Arguing in reverse, we consider the edge case, seeking the curve inside the unit circle which reflect to a tangent line of the unit circle and compute its curvature at the point of tangency. That should be a bound, i.e., curves inside the unit circle and tangent to it at this point should reflect to something dispersing outside if they have larger curvature.

Polar coordinates:

Write \(r=1\) for the unit circle and \(r=\sec\theta\) for the tangent line at \(\theta=0\) (then \(x=r\cos\theta\equiv1\)).
The reflected point (on the same radial line) is determined by having distance \(d_2\) from the unit circle given by
\[
\frac1{d_1}+\frac1{d_2}=\frac1f=\frac2R=2\kappa,
\]
where \(f\) is the focal length, \(R\) the radius of curvature, \(\kappa\) the curvature, and \(d_1(\theta)=1-\sec\theta\) is the (signed) radial distance to the unit circle; negative due to the sign convention for this mirror equation---the focus has positive distance. Here, \(R=\kappa=1\), so
\[
\frac1{d_2}=\frac1f-\frac1{d_1}=2-\frac1{1-\sec\theta}=2-\frac{\cos\theta}{\cos\theta-1}=\frac{2\cos\theta-2-\cos\theta}{\cos\theta-1}=\frac{\cos\theta-2}{\cos\theta-1}.
\]
For \(|\theta|<\pi/2\), the reflected curve is therefore given by the even function
\[
r=1-d_2=f(\theta)\dfn1-\frac{\cos\theta-1}{\cos\theta-2}=\frac{\cos\theta-2-(\cos\theta-1)}{\cos\theta-2}=\frac{1}{2-\cos\theta}\stackrel{\scriptscriptstyle\theta=0}{=\!\!=}1.
\]
\begin{remark}
This is part of the ellipse \(\frac94(x-\frac13)^2+3y^2=1\). The curvature at the right vertex is \(\frac{\sqrt{4/9}}{1/3}=2\), and this is the maximum.\urldef{\urlA}\url{https://en.wikipedia.org/wiki/Radius_of_curvature#Examples}\footnote{\urlA}\begin{center}\includegraphics[height=2in]{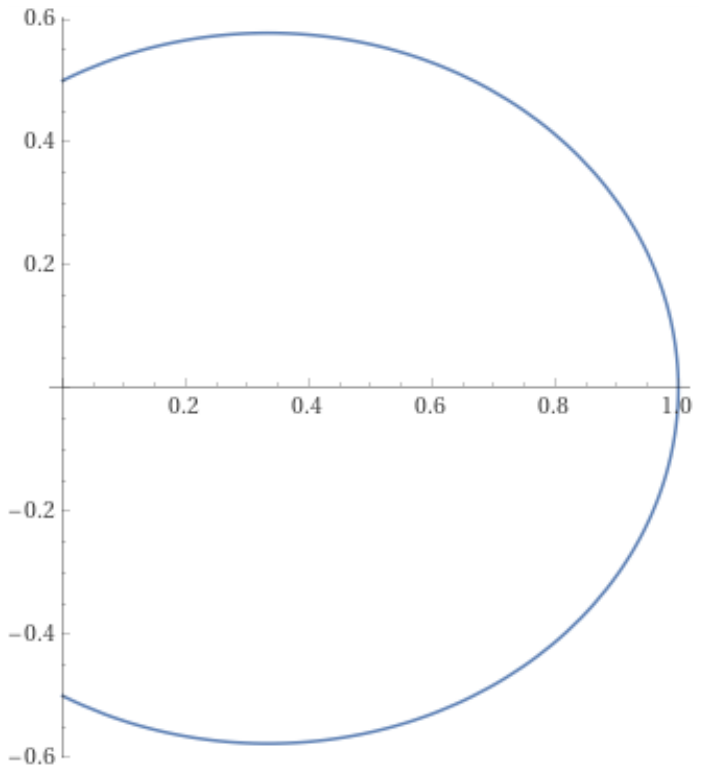}\end{center}
\end{remark}
In polar coordinates, the curvature is\urldef{\urlB}\url{http://mathonline.wikidot.com/the-curvature-of-plane-polar-curves}\footnote{\urlB}
\[
\kappa(\theta)=\frac{|2(f')^2+f^2-ff''|}{((f')^2+f^2)^{3/2}}.
\]
To compute \(\kappa(0)\) for an even function \(f\) use \(f(0)=1\) and \(f'(0)=0\) to simplify this to \(\kappa(0)=\frac{|f^2(0)-f(0)f''(0)|}{f^3(0)}=|1-f''(0)|\). Here,
\[
f'(\theta)=-\frac{\sin\theta}{(2-\cos\theta)^2}\stackrel{\scriptscriptstyle\theta=0}{=\!\!=}0,
\]
and
\[
f''(\theta)=-\frac{\cos\theta}{(2-\cos\theta)^2}+\sin\theta\big[\dots\big]\stackrel{\scriptscriptstyle\theta=0}{=\!\!=}-1.
\]
This gives \(\kappa(0)=1-(-1)=2\), so the reflected curve has curvature 2 at the tangency, and curves with larger curvature reflect (over the unit circle) outward to a dispersing curve.

Modulo scaling, this means that the dispersing curve should have more than twice the curvature of the focusing cheek.
\end{proof}


\begin{remark}
Since 2 is the maximum curvature, presumably ``curvature at least 2'' suffices here. Some work is needed to make this effective beyond the circle case because for hyperbolicity more than ``deep in the cusp'' is needed.\newline
Of course, a curvature gap should also help eject trajectories from the horn, as remarked above!\newline
Hyperbolicity is tricky, however, because of the transition into and out of the horn. Deep inside, the cusp results apply to give alternating collisions and dispersion, but in between there can be arbitrarily many successive collisions with the  focusing side, and transitions between the focusing and dispersing sides may not preserve cones. It seems that anything substantial about hyperbolicity is beyond our scope.
\end{remark}
\section{Continuum approximation for adiabatic invariant}\label{sec:adiabatic}\strut\COMMENTINLINE{Copied and adapted from the ``heavy'' paper, so ``horizontal'' and ``vertical'' do not match!}%
As long as the velocity is not ``too vertical'',  collisions happen in rapid succession. This will allow us to   approximate the discrete process with a continuous one. Each collision results in some increase in the $y$-component  of the particle's velocity. This velocity  increase averaged over the short inter-collision time amounts to a steady upward force.  We now show that this force is given to the leading order of accuracy by      
\begin{equation}
	 \ddot y =   v  ^2   \frac{g'(y) }{g(y)} - a , \ \ \hbox{where} \ \   v ^2 = 2(E-ay)- \dot y ^2.  
	\label{eq:accel}
\end{equation}  
where $ E $ is the energy (kinetic plus potential) of the particle  and where $ g $ defines the cusp: its  walls   are the graphs of $ x=\pm g(y) $.   
 Solutions of this differential equation approximate vertical motions of a billiard particle in a sufficiently small neighborhood of the tip of the cusp. 
\begin{figure}[thb]
	\center{  \includegraphics{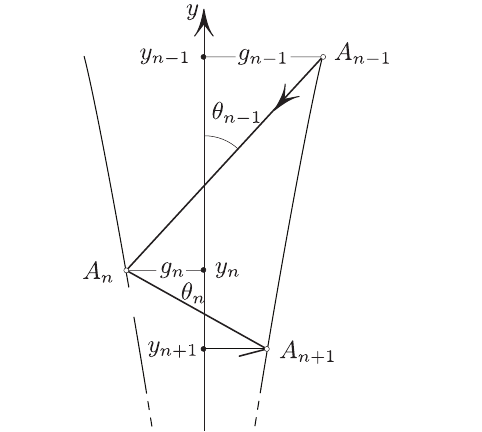}}
	\caption{Derivation of the smooth approximation to the billiard motion).}
	\label{fig:notations}
\end{figure}   
We now derive (\ref{eq:accel})   by ``smoothing'' the reflection law which states that the incidence and reflection angles are equal.
\begin{lemma}[Reflection law]\label{reflaw}
In the notations of Figure \ref{fig:notations},  the law of reflection at the \(n^{\text{th}}\) collision is given by 
\begin{equation}\label{EQel}
\cos\theta_{n-1} -\cos\theta_{n}= (\sin\theta_{n-1}+\sin\theta _n)g'_n.
\end{equation}
\end{lemma}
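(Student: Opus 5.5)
This is a direct trigonometric computation. I will set up coordinates matching Figure~\ref{fig:notations}: the cusp axis is vertical and the wall is the graph $x=g(y)$. The angle $\theta_{n-1}$ is the angle the incoming segment makes with the vertical, and $\theta_n$ the angle of the outgoing segment. The unit velocities are then $(\sin\theta_{n-1},\cos\theta_{n-1})$ before the $n^{\text{th}}$ collision and $(-\sin\theta_n,\cos\theta_n)$ after it, with the sign of the horizontal component reversed because the particle bounces off the right wall.

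At the collision point the tangent to the wall is $(g'_n,1)$ and the normal is $(1,-g'_n)$, each up to normalization. The reflection law states that the tangential component of velocity is preserved. Equivalently, the velocity change is parallel to the normal. I will use the second form, since it avoids square roots.

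The velocity difference is
\[
\vec v_{\rm out}-\vec v_{\rm in}=\bigl(-(\sin\theta_{n-1}+\sin\theta_n),\ \cos\theta_n-\cos\theta_{n-1}\bigr).
\]
Requiring this vector to be parallel to $(1,-g'_n)$ means its cross product with $(1,-g'_n)$ vanishes:
\[
(\sin\theta_{n-1}+\sin\theta_n)g'_n-(\cos\theta_n-\cos\theta_{n-1})=0.
\]
This rearranges to \eqref{EQel}. Speed conservation, $|\vec v_{\rm in}|=|\vec v_{\rm out}|=1$, is built in by the parametrization. Together with the parallelism condition, it characterizes specular reflection: the difference of two equal-length vectors is perpendicular to their sum, so it lies along the normal exactly when the tangential components agree.

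The only real obstacle is bookkeeping. I need to fix the sign conventions for $\theta_{n-1}$ and $\theta_n$ so they match Figure~\ref{fig:notations}: both measured from the upward vertical, with the horizontal component reversing at the wall. I also need the normal's orientation to be consistent. A sign slip would produce $\cos\theta_{n-1}+\cos\theta_n$ or a minus sign in front of $g'_n$. I will check the result against the sanity case $g'_n=0$, a vertical wall. There \eqref{EQel} gives $\cos\theta_{n-1}=\cos\theta_n$, which is correct, since the vertical velocity is unchanged. As a second check, for small $g'_n>0$ the vertical velocity $\cos\theta$ should decrease, that is, increase upward away from the tip. This matches the repulsion expected from a widening wall.
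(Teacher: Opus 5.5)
Your method (the velocity jump is parallel to the normal, written as a vanishing cross product) is sound, but the last step is wrong. The condition you derive, $(\sin\theta_{n-1}+\sin\theta_n)g'_n-(\cos\theta_n-\cos\theta_{n-1})=0$, rearranges to $\cos\theta_n-\cos\theta_{n-1}=(\sin\theta_{n-1}+\sin\theta_n)g'_n$. That is \eqref{EQel} with its left-hand side negated, not \eqref{EQel} itself.

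Rechecking signs will not repair this, because under your parametrization \eqref{EQel} is false. Your velocities $(\sin\theta_{n-1},\cos\theta_{n-1})$ and $(-\sin\theta_n,\cos\theta_n)$ measure $\theta$ from the \emph{upward} vertical, so the particle is moving away from the tip. Reflection off the widening wall then makes it steeper: $\theta_n=\theta_{n-1}-2\alpha_n$ with $\tan\alpha_n=g'_n$. For example, $\theta_{n-1}=0.5$ and $\alpha_n=0.1$ give $\theta_n=0.3$. The left side of \eqref{EQel} is then $\cos 0.5-\cos 0.3\approx-0.078$, while the right side is $\approx+0.078$.

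Your sanity checks miss this. The case $g'_n=0$ cannot detect this sign. The second check is self-contradictory: in your parametrization $\cos\theta$ \emph{is} the upward velocity, so ``$\cos\theta$ decreases'' cannot mean ``increases upward.'' Done consistently, that check says $\cos\theta$ increases, which contradicts \eqref{EQel}.

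The fix is to use the convention of the surrounding text: $\theta$ is measured from the \emph{downward} vertical, toward the tip. The velocities are then $(\sin\theta_{n-1},-\cos\theta_{n-1})$ and $(-\sin\theta_n,-\cos\theta_n)$. This is why the paper calls $v\cos\theta_{n-1}-v\cos\theta_n$ an \emph{increase} of vertical velocity. It is also why the paper's proof writes the equal-angle law as $\theta_{n-1}+\alpha_n=\theta_n-\alpha_n$: each collision turns the particle away from the tip.

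With this convention the jump is $\bigl(-(\sin\theta_{n-1}+\sin\theta_n),\ \cos\theta_{n-1}-\cos\theta_n\bigr)$, and your cross product with $(1,-g'_n)$ gives exactly \eqref{EQel}. So corrected, your argument is a valid alternative to the paper's. The paper exponentiates the equal-angle law to $e^{i\theta_{n-1}}e^{i\alpha_n}=e^{i\theta_n}e^{-i\alpha_n}$, divides by $\cos\alpha_n$ to get $e^{i\theta_{n-1}}(1+ig'_n)=e^{i\theta_n}(1-ig'_n)$, and takes real parts. Your version avoids the auxiliary angle $\alpha_n$ by working directly with the normal impulse.
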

\begin{proof}
If \(g'_n=\tan\alpha_n\), hence \(e^{\pm i\alpha_n}=\cos\alpha_n(1\pm ig'_n)\), then ``angle of incidence $=$ angle of reflection'' means that \(\theta_{n-1}+\alpha_n=\theta_n-\alpha_n\), and thus
\[
e^{i\theta_{n-1}}e^{i\alpha_n}=e^{i(\theta_{n-1}+\alpha_n)}=e^{i(\theta_n-\alpha_n)}=e^{i\theta_n}e^{-i\alpha_n},
\]
or \(e^{i\theta_{n-1}}(1+ig'_n)=e^{i\theta_n}(1-ig'_n)\). Taking the real part results in (\ref{EQel}). \end{proof}

We now derive the differential equation (\ref{eq:accel}). 
In a small vicinity of the cusp collisions happen in rapid succession,  provided   horizontal velocity $ v_{\rm hor}$ is not too small.  At $ n $th collision vertical velocity $ v_{\rm vert} $ increases instantaneously by 
\[
	\Delta v_{\rm vert}=v\cos\theta_{n-1}-v\cos\theta _n, 
\]  
where $ v $ is the constant speed of the billiard particle. The time between this collision and the next one is 
\[
	\Delta t  =  \frac{g_n+g_{n+1}}{v_{\rm hor}}. 
\]    
The increase $ \Delta v_{\rm vert}$ averaged over this time gives an average acceleration:  
\begin{equation}
\frac{\Delta v_{\rm vert}}{ \Delta t}= v\frac{\cos\theta_{n-1}-\cos\theta _n}{(g_n+g_{n+1})/v_H}\Buildrel{\Small\eqref{EQel}}{=}
vv_{\rm hor}
\frac{(\overbracket{\sin\theta_{n-1}}^{\mathclap{\approx\sin\theta_n}}+\overbracket{\sin\theta _n}^{\mathclap{=v_{\rm hor}/v}})g'_n}{g_n+\underbracket{g_{n+1}}_{\mathclap{\approx g_n}}}
\approx	v_{\rm hor}^2\frac{g'_n}{g_n}.
	\label{EQaccel}
\end{equation}
For a horn, the denominator is \((g_n-f_n)/v_{\rm hor}\) instead, so the right-hand side becomes \(\displaystyle 2v_{\rm hor}^2\frac{g'_n}{g_n-f_n}\). Between this and the next collision, one gets an additional  \(\displaystyle 2v_{\rm hor}^2\frac{-f'_n}{g_n-f_n}\),\COMMENTINLINE{The ``\(-\)'' is crucial for an adiabatic invariant, so check! But the contributions should be in opposite directions.} hence
\[
2\frac{\Delta v_{\rm vert}}{2\Delta t}\approx2v_{\rm hor}^2\frac{g_n-f'_n}{g_n-f_n}.
\]

In conclusion,  the impulsive acceleration due to collisions is approximated by the continuously acting acceleration 
$ v_{\rm hor}^2\frac{g'_n-f'_n}{g_n-f_n} $.

{\bfseries Recap in case any of the below seems nicer than above.}

\section{Adiabatic invariant via variational principle}
This is a heuristic derivation of the adiabatic invariant.  {\bf  Here $ d $ is the width of the horn, i.e. the distance between the endpoints of the segment connecting the two curves and normal to (say) the inner curve). } 

Let $s_k $ be the arc-length parameter of the collision with (say) the outer boundary. The length of the trajectory between two consecutive collisions is 
\[
    L(s_k, s_{k+1})= \sqrt{  \Delta s ^2 + 4 d ^2 }  
\]  
ignoring a higher-order error (i.e. assuming proximity to the cusp and $ \Delta s=O(d) $, i.e. sufficiently transversal impact;  here  $ \Delta s = s_{k+1}-s_k $.   

We approximate the sum of small summands $ L(s_k, s_{k+1}) $  by an integral: 
\[
\sum L \approx \int \sqrt{(s')^2 + 4d(s) ^2} d\tau,
\]  
where $s(\tau) = s_k$, $ s' (\tau)= (s_{k+1}-s_k)/((k+1)-k) $ (I must admit, not completely satisfactory). 
Since the Lagrangian  $\tilde L = \sqrt{(s')^2 + 4d(s) ^2} $ does not depend on the independent variable $ \tau $, the Hamiltonian is preserved according to Noether's theorem: 
\[
    s' \tilde L _{s' } - \tilde L = 
    -\frac{-4 d ^2 }{\sqrt{(s')^2 + 4d(s) ^2}} = {\rm const.}, 
\]  
i.e. 
\begin{equation}
	     2d\frac{2d}{\Delta s ^2 + 4d ^2 2} = (2d)( v_\perp), 
	\label{eq:ai}
\end{equation}  
  
i.e. the product of width and the velocity in the direction of the width. 

\section{Derivation of the differential equation}
We obtain a differential equation whose solutions approximate the discrete sequence $ s_k $ of collisions by differentiating (\ref{eq:ai}) which we first write as 
\[
    dv_\perp = d \sqrt{  1- \dot s ^2  } = {\rm const.} 
\]  
where  {\bf  now $ s $ is a function of time rather than of the collision number.}  We obtain 
\[
     d' \cancel{\dot s} \sqrt{  1- \dot s ^2  } - 
     d \frac{\cancel{\dot s} \ddot s }{\sqrt{  1- \dot s ^2  }}=0,  
\]  
or
\[
    \ddot s = \frac{d' }{d} ( 1- \dot s ^2) =
    \frac{d' }{d}v_\perp ^2 
\]  
\section{Momentum change in a collision pair}
\begin{figure}[hb]
    \centering
\begin{tikzpicture}[scale=12]
\clip (0,-5) rectangle (1,-4.8);
\draw (0,-5) --	(1,-5);
\draw[line width=.5pt] (0,0) circle (5);
\draw[line width=.5pt] (0,-3) circle (2);
\draw[-stealth] (.75,-4.9) -- (.71,-4.95) -- (.65,-4.892) -- (.63,-4.93);
\draw (.71,-5) --  (.71,-4.99) node[at end,anchor=south]{$\scriptstyle x_n$};
\draw(.65,-5) -- (.65,-4.99) node[at end,anchor=south]{$\scriptstyle x_{n+1}$};
\draw (.94,-4.933) node {$y=f(x)$};
\draw (.92,-4.82) node {$y=g(x)$};
\end{tikzpicture}
    \caption{Collision pair}
    \label{FIGDoubleCollision}
\end{figure}
\COMMENTINLINE{The notations may not match those elsewhere and also not those in the handwritten note from where the picture is taken. I used \(g\) instead of \(g_+\), \(f\) instead of \(g_-\). Also inconsistent between \(g_n\) versus \(g(x_n)\) and the like.\newline A TikZ picture with correct (and more?) notations would be nice---once we know that we want to use this. Maybe prior pictures suffice.}

Note that
\[
    \Delta v_n=-f'(x_n),\quad\Delta v_{n+1}=g'(x_{n+1})\text{, and }|x_{n+1}-x_n|<g(x_n)-f(x_n),
\]
provided the slope of the trajectory is at least 1 (say). Writing \(h\dfn g-f\), convexity of \(h\) implies that \(\frac{h'(x)}{h(x)}>\frac1x\) and hence
\[\Delta v_n+\Delta v_{n+1}=\underbracket{f'(x_{n+1})-f'(x_n)}_{\mathclap{\approx f''(x_n)\Delta x_{n+1}\qquad}}+\underbracket{h'(x_{n+1})}_{\mathclap{\qquad>\frac{h(x_{n+1})}{x_{n+1}}>\frac{\Delta x_{n+1}}{cx_{n+1}}}}
>\frac{\Delta x_{n+1}}{x_{n+1}}\Big(\underbracket{\frac1c-x_{n+1}\sup f''}_{>1/C>0}\Big).\]
Summed over \(N\) subsequent collision pairs, the total velocity change \(\Delta v\) satisfies 
\[ 
	 C\Delta v>\sum_{k=1}^N\frac{x_k-x_{k+1}}{x_k}
        \ge  \sum_{k=1}^N\int_{x_{k+1}}^{x_k}\frac{dx}{x} =
	 \ln x_0-\ln\underbracket{x_{N+1}}_{\to0^+}  \lto{N\rightarrow\infty}\infty.
\] 
\color{black}
\bibliography{cusp-bib}
\end{document}